\newtheorem{thm}{Theorem}[section]
\newtheorem{prop}[thm]{Proposition}
\newtheorem{cor}[thm]{Corollary}
\newtheorem{lem}[thm]{Lemma}
{\theoremstyle{definition}
\newtheorem{exa}[thm]{Example}
\newtheorem{rem}[thm]{Remark}}
\newcommand{\pic}{\operatorname{Pic}}
\newcommand{\cl}{\mathcal{C}}
\newcommand{\zl}{\mathcal{Z}}
\newcommand{\pl}{\mathcal{P}}
\newcommand{\yl}{\mathcal{Y}}
\newcommand{\rb}{\mathbb{R}}
\newcommand{~}{\quad}
\newcommand{\cb}{\mathbb{C}}
\newcommand{\zb}{\mathbb{Z}}
\newcommand{\glt}{\geqslant}
\newcommand{\llt}{\leqslant}
\newcommand{\undl}{\underline}
\begin{document} 

\title[Welschinger invariants under real surgeries]
{Higher genus Welschinger invariants under real surgeries}

\author{Yanqiao Ding}

\address{School of Mathematics and Statistics\\ Zhengzhou University\\
                       Zhengzhou, 450001 \\ P. R. China}
\email{yqding@zzu.edu.cn}


\subjclass[2010]{Primary 14N10, 14N35; Secondary 53D45, 14N05, 14P25;}
\keywords{Genus decreasing formula, Welschinger invariants,
Real enumerative geometry.}

\begin{abstract}
  According to \cite{brugalle2016}, a real surgery of a real del Pezzo surface $X_\rb$
  along a real sphere $S$ is a modification of the real structure on $X_\rb$ in a neighborhood of $S$.
  In this paper, we study the behavior of higher genus Welschinger invariants under real surgeries,
  and obtain a genus decreasing formula of Welschinger invariants.
\end{abstract}

\maketitle


\section{Introduction}
Let $(X,\tau_X)$ be a real algebraic surface.
Denote by $H^{\tau_X}_2(X;\zb)$ the space of $\tau_X$-invariant classes in $H_2(X;\zb)$,
and by $H^{-\tau_X}_2(X;\zb)$ the space of $\tau_X$-anti-invariant classes in $H_2(X;\zb)$.
$S\subset X$ is called a \textit{real sphere} of $X$ if $S$ is a sphere globally invariant under $\tau_X$.
Let $\Delta\subset\cb$ be a small disk endowed with complex conjugation,
and let $\pi:\zl\to \Delta$ be a flat real morphism from a smooth real algebraic
manifold of complex dimension $3$. Suppose the fiber $\zl_\tau$, $\tau\in\Delta^*=\Delta\setminus\{0\}$,
is a non-singular projective algebraic surface, and the central fiber $\zl_0$ is
a real algebraic surface with one singular point $z$ of type $A_1$ (node).
Suppose in addition that $\zl$ is locally given at $z$ by the equation
$$
x_1^2+x_2^2\pm x_3^2=\tau,~(x_1,x_2,x_3,\tau)\in\cb^4,
$$
and that $\pi$ is locally given by $\pi(x_1,x_2,x_3,\tau)=\tau$.
There exists a real sphere $S\subset\zl_\tau$ such that
$\zl_\tau$ and $\zl_{-\tau}$ are obtained one from the other by a modification of the real structure
in a neighborhood of $S$ when $\tau\in\rb\Delta^*$.
The real algebraic surface $Y_\rb=\zl_\tau$ is called the \textit{real surgery of $X_\rb=\zl_{-\tau}$ along $S$}.
We refer the readers to Section $\ref{sec:nodal deg}$ or \cite[Section $4.2$]{iks2013a} for more details,
and to \cite{brugalle2016} for the symplectic case: real surgeries along real Lagrangian spheres.
The class $[S]$ in $H_2(X;\zb)$ is $\tau_X$-anti-invariant if and only if it is
$\tau_Y$-invariant (in this case $\chi(\rb Y_\rb)=\chi(\rb X_\rb)+2$)
(see \cite[Section 1.3]{brugalle2016}).

The behavior of Welschinger invariants under real surgeries has
many important applications such as: the computations, the vanishing results,
the positivity and asymptotic properties.
Itenberg-Kharlamov-Shustin \cite{iks2013a} applied the degeneration technique
to study the positivity and asymptotics of Welschinger invariants of
real del Pezzo surfaces of degree $\geq 2$.
By applying a real version of symplectic sum formula \cite{egh2000,ip2004,lr2001,li2002},
Brugall\'{e}-Puignau \cite{bp2013,bp2014} investigated the relations of genus $0$
Welschinger invariants of two real symplectic manifolds $X_\rb$ and $Y_\rb$,
where $Y_\rb$ is the real surgery of $X_\rb$ along a real Lagrangian sphere $S$.
Through combining the relations from \cite{bp2014} with the technique of floor diagrams relative to a conic,
E. Brugall\'e \cite{bru2015} obtained some explicit computations of  Gromov-Witten invariants
and Welschinger invariants of some del Pezzo surfaces.
By excluding the ``bad'' bifurcation, E. Shustin \cite{shustin2015} extended the definition of
Welschinger invariants to higher genus case for real algebraic del Pezzo surfaces.
Through adapting the proof of \cite{shustin2015} in the strategy proposed in \cite{wel2005a},
E. Brugall\'e \cite{brugalle2016} proposed a way to define the higher genus
Welschinger invariants in the symplectic category.
Note that except the class $d$ and the number of conjugated pairs,
genus $0$ Welschinger invariants also depend on the choice of a connected
component $L$ of $\rb X_\rb$, and of a class $F\in H_2^{\tau_X}(X\setminus L;\zb/2\zb)$
(see \cite{brugalle2016,iks2014,shustin2015,wel2005a,wel2015}).
When the chosen connected component $L$ is disjoint from
the Lagrangian sphere $S$, and the class $F$ is normal to $[S]$,
then the relations from \cite{bp2014} only involve genus $0$
Welschinger invariants of $X_\rb$ and $Y_\rb$.
Observed this fact, E. Brugall\'{e} \cite{brugalle2016} obtained
a surprisingly very simple relation among genus $g$ Welschinger
invariants of the real symplectic 4-manifolds differing by a special kind of real surgery.

The main purpose of the present paper is to provide another important
observation about the higher genus Welschinger invariants of two real del Pezzo surfaces
$X_\rb$ and $Y_\rb$ which are related by a real surgery.
When the real sphere $S\subset\rb Y_\rb$,
which is disjoint with a chosen subset $\undl L'$ of $g$ connected components of $\rb X_\rb$,
contains one real point of the real configuration $\undl x$ and $F$ is normal to $[S]$,
there is a surprising phenomenon of decreasing genera. Namely,
the Welschinger invariants $W_{Y_\rb,(\undl L',S),F}$ with genus $g$ of $Y_\rb$ is equal to a certain
combination of Welschinger invariants $W_{X_\rb,\undl L',F}$ of $X_\rb$ with smaller genus.
Note that the genus is indexed by the number of the chosen connected real components.
We refer the readers to Section $\ref{subsec:hgwel-alg}$ or \cite{shustin2015}
for the definition of higher genus Welschinger invariants of real del Pezzo surfaces
(See also \cite{brugalle2016} for the symplectic case).

\begin{thm}\label{thm:gdf-alg}
Let $X_\rb$, $Y_\rb$ be two real del Pezzo surfaces of the same degree $\glt2$
with $\rb X\neq\emptyset$, $\rb Y=\rb X\sqcup S^2$,
and $S$ be a real sphere in $X_\rb$ realizing a class in $H^{-\tau_X}_2(X;\zb)$.
Let $\undl L=(L_0,\ldots,L_g)$, $\undl L'=(L_0,\ldots,L_{g-1})$,
where $L_0,\ldots,L_{g-1}$ are $g$ distinct connected components of $\rb X_\rb$ which is disjoint from $S$,
$L_g=S^2\subset\rb Y_\rb$, and $L=\cup_{i=0}^{g-1}L_i$.
Assume $F\in H^{\tau_Y}_2(X\setminus(L\cup S);\zb/2\zb)$ is normal to $[S]$.
Suppose the class $d\in H^{-\tau_Y}_2(X;\zb)$ satisfies $c_1(X)\cdot d+g-2>0$ and $d\neq l[E_i]$,
where $[E_i]$ are the exceptional divisors. Let $\undl r=(r_0,\ldots,r_{g-1},1)$,
$\undl r'=(r_0,\ldots,r_{g-1})$. Then
\begin{equation}\label{eq:gdf}
W_{Y_\rb,\undl L,F}(d,\undl r)=\sum_{k\glt1}(-1)^{(k-1)}k^2W_{X_\rb,\undl L',F}(d-k[S],\undl r'),
\end{equation}
whenever the invariant $W_{Y_\rb,\undl L,F}(d,\undl r)$ is defined.
\end{thm}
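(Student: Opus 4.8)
The plan is to run the nodal degeneration $\pi\colon\zl\to\Delta$ from the setup, which exhibits $Y_\rb=\zl_\tau$ and $X_\rb=\zl_{-\tau}$ as the two real smoothings of the nodal central fiber $\zl_0$, and to extract the formula by analyzing how the genus $g$ real curves on $Y_\rb$ degenerate as $\tau\to 0^+$. First I would arrange the point configuration so that the unique real point prescribed on $L_g=S^2$ (recorded by the final entry $1$ of $\undl r$) is carried to the node $z$ as $\tau\to 0^+$; this is geometrically forced, since $L_g=S^2$ is the real locus of the vanishing cycle $S$ and contracts to $z$ from the $Y_\rb$ side, where $[S]$ is $\tau_Y$-invariant. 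Each genus $g$ curve $C\subset Y_\rb$ counted by $W_{Y_\rb,\undl L,F}(d,\undl r)$ then limits to a real curve $C_0\subset\zl_0$ through $z$, and using the local model $x_1^2+x_2^2\pm x_3^2=\tau$ I would verify that the remaining incidences on $L_0,\dots,L_{g-1}$ (carried by $\undl r'$) survive the limit unchanged.

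The second step is to identify these limits with curves on $X_\rb$. Resolving the node in the opposite direction ($\tau\to-\tau$) smooths $C_0$ to a real curve $C'\subset X_\rb$ of class $d-k[S]$, where $k$ is the local intersection multiplicity of $C_0$ with the vanishing cycle at $z$; since $[S]$ is $\tau_X$-anti-invariant and $F$ is normal to $[S]$, the substitution $d\mapsto d-k[S]$ is consistent with the constraints and the marking $F$ transports across the surgery. The decisive bookkeeping is that the genus drops by exactly one: the constraint on $S^2$ that contributed one unit of genus on the $Y_\rb$ side is absorbed into the node and disappears on the $X_\rb$ side, so $C'$ has genus $g-1$ and passes through $\undl r'$ on $\undl L'$. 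This is precisely the statement that the genus index equals the number of chosen real components, which decreases from $g+1$ to $g$.

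The third and hardest step is the weighted multiplicity count. Fixing $C'\subset X_\rb$ in class $d-k[S]$ of genus $g-1$ through $\undl r'$, I would count the genus $g$ real curves $C\subset Y_\rb$ degenerating to the associated $C_0$, each weighted by its Welschinger sign. A local deformation analysis at $z$, using the defining equation of $\zl$, should produce the factor $k^2$ from the degree of the map sending $Y_\rb$-smoothings to the contact data of an order-$k$ meeting with the vanishing cycle, while comparing the Welschinger orientation on the two sides of the surgery contributes the sign $(-1)^{k-1}$, governed by the parity of $k$ through the change in the number of solitary real nodes. Summing over all admissible $k\glt 1$ then yields $\sum_{k\glt 1}(-1)^{k-1}k^2\,W_{X_\rb,\undl L',F}(d-k[S],\undl r')$, which is the right-hand side of $\eqref{eq:gdf}$.

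The main obstacle is exactly this sign-and-multiplicity bookkeeping. One must show that no stray limits intervene — reducible configurations, multiple covers of $S$, or curves degenerating into an exceptional class — and that the local weight is precisely $(-1)^{k-1}k^2$ rather than some other polynomial in $k$. The hypotheses $c_1(X)\cdot d+g-2>0$ and $d\neq l[E_i]$ are what exclude these stray contributions and keep the relevant moduli transverse, so that the only curves limiting to $z$ are the expected ones; the normality of $F$ to $[S]$ and the $\tau_X$-anti-invariance of $[S]$ are what make the Welschinger sign comparison across the real surgery clean.
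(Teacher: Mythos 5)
Your proposal reproduces the coarse shape of the theorem (degenerate $Y_\rb$, match limits with curves on $X_\rb$ in classes $d-k[S]$, weight by $(-1)^{k-1}k^2$), but the two steps you yourself flag as decisive are not carried out, and as planned they would fail. First, you cannot run the analysis directly on the nodal surface $\zl_0$ with the marked point colliding with the node $z$: this is precisely the degenerate situation the paper avoids by passing to the $\theta$-unscrew (base change $\tau=t^2$ plus blow-up), whose central fiber is the normal crossing surface $\widetilde\zl_0=Z\cup_E(\cb P^1\times\cb P^1)$ with $\rb(\cb P^1\times\cb P^1)=S^2$; there the constraint point $p$ limits into the quadric \emph{away} from $E$, and the limits of stable maps are classified by Proposition~\ref{prop:dgfcplxmoduli-alg}. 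Moreover, the genus drop is not automatic (``absorbed into the node'' is not an argument): in the complex limit there are also configurations where $C_1$ keeps genus $g$, and one where $C_1$ splits as $C_1'\cup C_1''$ with $g(C_1')+g(C_1'')=g$. The paper kills the genus-$g$ cases because $\rb E=\emptyset$ forces all intersections with $E$ to come in conjugate pairs, and kills the split case because the two intersection points with the $l_1+l_2$-component would form a conjugate pair, making $C_1'$ and $C_1''$ conjugate components, contradicting the real point constraints on $L_0,\ldots,L_{g-1}$. This reality analysis is the heart of the genus-decreasing phenomenon and is absent from your plan.

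Second, $(-1)^{k-1}k^2$ is \emph{not} a local weight at the vanishing cycle, and no single deformation analysis at $z$ will produce it; your picture of a weighted bijection between curves on $Y_\rb$ and curves on $X_\rb$ in class $d-k[S]$ is structurally wrong. In the paper the $Y$-side degeneration yields
$W_{Y,\undl L,F}(d,\undl r)=\sum_{k\glt1}(-1)^{k-1}k2^{k-1}\,W^{E}_{Z,\undl L',F}(d-k[E],\undl r')$,
where the factor $k\cdot2^{k-1}$ counts the choice of which conjugate pair on $E$ carries the $l_1+l_2$-component and the classes $l_1$ or $l_2$ of the remaining $k-1$ conjugate pairs of lines, the sign comes from $m_{L\cup S,F}(\bar f)=m_{L,F}(\bar f_{|C_1})+k-1$, and the $W^{E}_{Z,\undl L',F}$ are \emph{relative} invariants of the monic log-del Pezzo pair $(Z,E)$ --- an intermediate object entirely missing from your proposal. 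A second, independent correspondence on the $X_\rb$ side (Corollary~\ref{cor:corr-alg}, via the mirror unscrew) expresses $W^{E}_{Z,\undl L',F}$ through the invariants $W_{X,\undl L',F}(d-2l[E],\undl r')$ with binomial coefficients, and the coefficient $(-1)^{i-1}i^2$ only emerges from the closed-form evaluation $u_i-u_{i-2}=i^2$ of the resulting double sum via Pascal's rule. You also omit the reduction of degree $\glt3$ to degree $2$ by blowing up real points, and the input $d\cdot[S]=0$ together with Lemma~\ref{lem:deg-alg} (where the hypothesis $d\neq l[E_i]$ actually enters), which underlie the matching of classes and the exclusion of stray limit components.
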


Because of \cite[Lemma 2.7]{brugalle2016}, all invariants $W_{X_\rb,\undl L',F}(d-k[S],\undl r')$
are also defined if $F$ is normal to $[S]$ in $H_2(X;\zb/2\zb)$ and $W_{Y_\rb,\undl L,F}(d,\undl r)$
is defined.

\begin{rem}
By using the definition of higher genus Welschinger invariants of symplectic $4$-manifolds
proposed by E. Brugall\'e \cite{brugalle2016},
one can prove formula $(\ref{eq:gdf})$ is also true in the symplectic case.
See the Appendix for an explicit expression of Theorem $\ref{thm:gdf-alg}$ in the symplectic case.
Some blow-up formulas of higher genus Welschinger invariants can also be obtained
which generalize the genus $0$ results \cite{dh2016a}.
Formally, \cite[Theorem 1.1,Theorem 1.2,Theorem 1.4]{dh2016a} are stated for $g=0$
and connected $\rb X$. However, neither $g$ nor the number of connected components of $\rb X$
plays a role in the proof if we replace \cite[Proposition 2.6]{dh2016a} by Lemma $\ref{lem:mspace-symp}$.
The readers may refer to \cite{dh2016a} for more details.
\end{rem}

\medskip\noindent
{\bf Acknowledgment}: The author is particularly grateful to Prof. Jianxun Hu
for his continuous support and encouragement as well as enlightening discussions,
and to Erwan Brugall\'e for pointing out a mistake of the first manuscript of this paper
and sharing the situation of the definition of higher genus Welschinger invariants.
The research is partially supported by Startup Research Fund of Zhengzhou University (No. $32210405$).

\section{Real surgeries and enumeration of curves}

\subsection{Nodal degenerations and real surgeries}\label{sec:nodal deg}
In this subsection, we review the nodal degeneration of del Pezzo surfaces
based on \cite[Section $4.2$]{iks2013a}.

Let $\Delta\subset\cb$ be a small disk,
and let $\pi:\zl\to \Delta$ be a holomorphic map from a smooth $3$-dimensional variety.
Suppose the fiber $\zl_\tau$, $\tau\in\Delta^*=\Delta\setminus\{0\}$, is a del Pezzo surface,
and the central fiber $\zl_0$ is a surface with one singular point $z$ of type $A_1$ (node).
With respect to appropriate local coordinates $x_1,x_2,x_3$ at the point $z$,
the map $\pi$ is given by
$$
\pi(x_1,x_2,x_3)=a_1x_1^2+a_2x_2^2+a_3x_3^2,~a_1a_2a_3\neq0.
$$
$\pi$ is a submersion at each point of $\zl\setminus\{z\}$.
The family $\pi:\zl\to\Delta$ is called \textit{nodal degeneration}.

Make the base change $\tau=t^2$ (resp. $\tau=-t^2$),
and let $\zl'=\Delta\times_{t^2=\pi}\zl$ (resp. $\yl'=\Delta\times_{-t^2=\pi}\zl$).
Perform the blow up $\widetilde\zl\to\zl'$ (resp. $\widetilde\yl\to\yl'$) at the node of the new family,
and obtain a family $\tilde\pi:\widetilde\zl\to\Delta$ (resp. $\tilde\pi':\widetilde\yl\to\Delta$),
whose fibers $\widetilde\zl_t$ (resp. $\widetilde\yl_t$), $t\neq0$, are del Pezzo surfaces,
and central fiber $\widetilde\zl_0=Z\cup (\cb P^1\times\cb P^1)$
(resp. $\widetilde\yl_0=Z\cup (\cb P^1\times\cb P^1)$),
$E=Z\cap (\cb P^1\times\cb P^1)$ is a smooth rational $(-2)$-curve in $Z$,
and $(Z,E)$ is a nodal del Pezzo pair (i.e. the anti-canonical class $-K_Z$ is effective,
positive on all curves different from $E$, and $K_ZE=0$).
In $H_2(\cb P^1\times\cb P^1;\zb)$, the curve $E$ represents the class $l_1+l_2$,
where $l_1=[\cb P^1\times\{p\}]$ and $l_2=[\{p\}\times\cb P^1]$
are the generators of $H_2(\cb P^1\times\cb P^1;\zb)$.
Note that $l_1$ and $l_2$ are well defined in $H_2(\cb P^1\times\cb P^1;\zb)$ up to interchanging with each other.
The family $\tilde\pi:\widetilde\zl\to\Delta$ (resp. $\tilde\pi':\widetilde\yl\to\Delta$)
is called the \textit{unscrew} (resp. \textit{mirror unscrew}) of the nodal degeneration
$\pi:\zl\to\Delta$.

If the nodal degeneration $\pi:\zl\to\Delta$ has a real structure
$\tau_\zl$ which lifts the standard complex conjugation, the point $z$ is
real and the map $\pi$ can be represented by
\begin{equation}\label{eq:real-quadratic}
\pi(x_1,x_2,x_3)=a_1x_1^2+a_2x_2^2+a_3x_3^2,~a_1,a_2,a_3\in\rb,~a_1a_2a_3\neq0,
\end{equation}
with respect to appropriate real local coordinates at $z$.
The real structure $\tau_\zl$ induces two real structures on the unscrew
$\tilde\pi:\widetilde\zl\to\Delta$ (resp. mirror unscrew $\tilde\pi':\widetilde\yl\to\Delta$).
The real unscrew (resp. mirror unscrew) with the real structure which covers the complex conjugation
$t\mapsto\bar t$ is called a \textit{$\theta$-unscrew} (resp. \textit{$\theta$-mirror unscrew}),
where $\theta$ is the signature of the quadratic form $(\ref{eq:real-quadratic})$.
The quadric $\cb P^1\times\cb P^1\subset\widetilde\zl_0$ is real, and
\begin{equation}
\rb (\cb P^1\times\cb P^1)\simeq
\left\{\begin{aligned}
&S^2,~~\text{ if }\theta=3\text{ or }-1,\\
&(S^1)^2,~\text{ if }\theta=1,\\
&\emptyset,~~~\text{if }\theta=-3.
        \end{aligned} \right.
\end{equation}
The other real structure on the unscrew (resp. mirror unscrew) covers the conjugation $t\mapsto-\bar t$.


Let $\tilde\pi:\widetilde\zl\to\Delta$ (resp. $\tilde\pi':\widetilde\yl\to\Delta$)
be the $\theta$-unscrew (resp. $(-\theta)$-mirror unscrew) of a nodal degeneration $\pi:\zl\to\Delta$.
Let $(Y,\tau_Y)$ (resp. $(X,\tau_X)$) be a real del Pezzo surface which is real deformation equivalent
to a real fiber $\widetilde\zl_t$ (resp. $\widetilde\yl_t$), $t\in\rb\Delta^*$,
of the $\theta$-unscrew (resp. $(-\theta)$-mirror unscrew).
Following the notation of real surgery along a real Lagrangian sphere introduced by
E, Brugall\'e in \cite{brugalle2016}, the real surface $(Y,\tau_Y)$ is called a
\textit{real surgery of $(X,\tau_X)$ along a real sphere $S$}.
The notation $Y_\rb \xrightarrow[]{S}  X_\rb$ means that
$X_\rb$ and $Y_\rb$ are related by a real surgery along the real sphere $S$
and $\chi(\rb Y_\rb)=\chi(\rb X_\rb)+2$.

\begin{exa}
If $X_\rb$ and $Y_\rb$ are two real del Pezzo surfaces of degree $2$
with $\rb X\neq\emptyset$ and $\rb Y=\rb X\sqcup S^2$,
$Y_\rb$ is the real surgery of $X_\rb$ along a real sphere.
Denote by $Q_X$, $Q_Y$ the real nonsingular quartic curves corresponding to
the real del Pezzo surfaces $X_\rb$, $Y_\rb$ of degree $2$ respectively (see \cite[Section $2.2$]{iks2013a} for details).
Note that the real part of a real nonsingular quartic curve is isotopic in $\rb P^2$
either to the union of $0\llt a\llt4$ null-homologous circles
placed outside each other or to a pair of null-homologous circles placed one inside the other.
In fact, the del Pezzo surfaces $X_\rb$, $Y_\rb$ of degree $2$ can be included into the following
(mirror) unscrews of real nodal degenerations corresponding to nodal degenerations of quartics.
\begin{itemize}
  \item Case $X_\rb$: we degenerate the quartic $Q_X$ into a nodal quartic $Q_X'$ with real part
  $\rb Q_X'=\rb Q_X\sqcup\{pt\}$, and choose a $(-3)$-mirror unscrew.
  \item Case $Y_\rb$: we degenerate the quartic $Q_Y$ into a nodal quartic so that one of the ovals
  collapses to a point, and choose a $3$-unscrew.
\end{itemize}
\end{exa}


\subsection{Higher genus Welschinger invariants}\label{subsec:hgwel-alg}
Let $X$ be a del Pezzo surface with a real structure $\tau_X$
such that $\rb X$ contains at least $g+1$ connected components
$L_0,\ldots,L_g$ for some $g\glt1$. Let $L=L_0\cup\ldots\cup L_g$,
$\undl L=(L_0,\ldots,L_g)$.
Denote by $\pic^\rb(X)\subset\pic(X)$ the subgroup of real divisor classes.
For each $D\in\pic^\rb(X)$, there exists a class
$l_{L_i,D}=[\rb C\cap L_i]\in H_1(L_i;\zb/2\zb)$,
where $C\in|D|$ is any real curve (cf. \cite{shustin2015}).

A divisor class $D\in\pic^\rb(X)$ is called \textit{L-compatible},
if $l_{G,D}=0$ for any connected component $G\subset\rb X\setminus L$.
Given $F\in H _2^{\tau_X}(X\setminus L;\zb/2\zb)$,
and choose a class $d\in H_2(X;\zb)$ corresponding to a big and nef, $L$-compatible divisor class
$D\in\pic^\rb(X)$ such that
\begin{equation}\label{eq:alg-class}
\frac{d^2-c_1(X)\cdot d}{2}+1\glt g \text{~and~} c_1(X)\cdot d\glt g+1-\sum_{i=0}^{g}l_{L_i,d}^2.
\end{equation}
Let $\pl_{\undl r,m}(X,\undl L)$ (where $\undl r=(r_0,\ldots,r_g)$)
be the space of configurations consisting of $r_i$ real points in $L_i$, $i=0,\ldots,g$,
and $m$ pairs of complex conjugated points in $X\setminus\rb X$.
Choose $(\undl r,m)$ such that
\begin{equation}\label{eq:alg-number}
c_1(X)\cdot d+g-1=r_0+\ldots+r_g+2m ~\text{and}~ r_i\equiv l_{L_i,d}^2+1 \mod 2~~ \forall i\in\{0,...,g\}.
\end{equation}


Assume $\undl x\in\pl_{\undl r,m}(X,\undl L)$.
Denote by $\cl(X,d,\undl L,\undl{x})$ the set of real holomorphic curves $f:\Sigma_g \to X$
of genus $g$ in $X$ realizing the class $d$,
passing through $\undl{x}$, and such that $f(\rb\Sigma_g)\subset L$.
Condition $(\ref{eq:alg-number})$ implies that each $L_i$ contains a
connected component of $f(\rb\Sigma_g)$,
and $\Sigma_g$ is a maximal real curve (\textit{i.e.} $\rb\Sigma_g$ has exactly $g+1$ connected components).
If $X$ is sufficiently generic in its deformation class,
and $\undl x\in\pl_{\undl r,m}(X,\undl L)$ is generic,
it follows from \cite[Lemma A$.3$]{shustin2015} that the set
$\cl(X,d,\undl L,\undl{x})$ is finite and composed of real irreducible immersed curves.
Then the integer
$$
W_{X,\undl L,F}(d,\undl r)=\sum_{f\in\cl(X,d,\undl L,\undl{x})}(-1)^{m_{L,F}(f)}
$$
depends neither on $\undl x$, nor on the deformation class of $(X,d,\undl L, F)$
\cite[Theorem $2.1$]{shustin2015}. The $(L,F)$-mass $m_{L,F}(f)$ is defined as
$m_{L,F}(f)=m_L(f)+[f(\Sigma_g^+)]\cdot F$,
where $m_L(f)$ is the number of real isolated nodes
(\textit{i.e.} nodes with two $\tau_X$-conjugated branches) of $f(\Sigma_g)$ in $L$,
and $\Sigma_g^+$ is a half of $\Sigma_g\setminus\rb\Sigma_g$.

\subsection{Relative invariants of monic log-del Pezzo pair}\label{subsec:rel wel-alg}
Let $Z$ be a smooth rational surface which is a blow-up of $\cb P^2$,
and $E\subset Z$ be a smooth rational curve.
Assume $(Z,E)$ is a monic log-del Pezzo pair \textit{i.e.} $-K_Z$ is positive on all curves different from $E$,
$K_ZE\glt0$, $-(K_Z+E)$ is nef and effective, and $(K_Z+E)^2=0$.
The surface $Z$ can be considered as the
plane blown up at $n$ points, $n\glt6$, on a smooth conic and at $1$ point outside the conic,
and $E$ is the strict transform of the conic (see \cite[Section $3.1$]{iks2013a} for details).

An isomorphism between two algebraic maps $f_1:C_1\to Z$ and $f_2:C_2\to Z$
is an isomorphism $\phi:C_1\to C_2$ such that $f_1=f_2\circ\phi$.
Maps are always considered up to isomorphisms.
Given a vector $\alpha=(\alpha_i)_{1\llt i<\infty}\in\zb_{\glt0}^\infty$,
we use the following notations:
$$
|\alpha|=\sum_{i=1}^{+\infty}\alpha_i,  \,\,\,\,\,
I\alpha=\sum_{i=1}^{+\infty}i\alpha_i.
$$
For $k\in \mathbb{Z}_{\glt 0}$ and $\alpha = (\alpha_i)_{1\llt i <\infty}$,
denote $k\alpha := (k\alpha_i)_{1\llt i <\infty}$.
Denote by $\delta_i$ the vector in $\zb_{\glt0}^\infty$ whose all coordinates are
$0$ except the $i$th one which is equal to $1$.

Let $Z$ be the blow up of $\cb P^2$ at $6$ points on a smooth conic
and at $1$ point outside the conic. Suppose that $E$ is the strict transform of the conic.
Denote by $E_i$, $i=1,\ldots,6$, the exceptional divisors of the blow-up at the $6$
points on the conic, and by $E_7$ the exceptional divisor of the blow-up at the point outside the conic.
Let $d\in H_2(Z;\zb)$, $g\in\zb_{\geq0}$,
and $\alpha$, $\beta\in\zb_{\geq0}^\infty$ such that
\begin{equation}\label{eq:tan con-alg}
I\alpha+I\beta=d\cdot[E].
\end{equation}
Choose a configuration $\undl x=\undl x^\circ\sqcup\undl x_{E}$ of points in $Z$,
with $\undl x^\circ$ a configuration of
$c_1(Z)\cdot d-1+g- d\cdot[E]+|\beta|$ points in $Z\setminus E$,
and $\undl x_{E}=\{p_{s,t}\}_{0<t\leq\alpha_s,s\geq1}$ a configuration
of $|\alpha|$ points in $E$.
Denote by $\cl^{(\alpha,\beta)}(Z,E,d,g,\undl x)$ the set
of holomorphic maps $f:\Sigma_g\to Z$ such that

$\bullet$ $\Sigma_g$ is a connected algebraic curve of arithmetic genus $g$;

$\bullet$ $f_*[\Sigma_g]=d$;

$\bullet$ $\undl x\subset f(\Sigma_g)$;

$\bullet$ $E$ is not a component of $f(\Sigma_g)$;

$\bullet$ $f^*(E)=\sum_{s\glt1}\sum_{t=1}^{\alpha_s}sq_{s,t}
+\sum_{s\glt1}\sum_{t=1}^{\beta_s}s\tilde q_{s,t}$, with $f(q_{s,t})=p_{s,t}$.\\
Let
$$
\cl_*^{(\alpha,\beta)}(Z,E,d,g,\undl x)=\{f(\Sigma_g)|
(f:\Sigma_g\to Z)\in\cl^{(\alpha,\beta)}(Z,E,d,g,\undl x)\}.
$$
%
%

Suppose $Z$ has a real structure $\tau_Z$
such that $\rb Z$ contains at least $g+1$ connected components
$L_0,\ldots,L_g$ for some $g\glt1$, and the real rational curve
$E$ has vanish real part $\rb E=\emptyset$. Let $L=L_0\cup\ldots\cup L_g$,
$\undl L=(L_0,\ldots,L_g)$.

Choose a big and nef, $L$-compatible divisor class $D\in\pic^\rb(Z)$ such that
\begin{equation}\label{eq:alg-class-rel}
\frac{d^2-c_1(Z)\cdot d}{2}+1\glt g,~ c_1(Z)\cdot d\glt g+1-\sum_{i=0}^{g}l_{L_i,d}^2,
\text{~and~} d\neq l[E_i], i=1,\ldots,7,
\end{equation}
where $d\in H_2(Z;\zb)$ is the homology class corresponding to the divisor class $D$
under the natural isomorphism.
Choose $(\undl r,m)$ such that
\begin{equation}\label{eq:alg-number-rel}
c_1(Z)\cdot d+g-1=r_0+\ldots+r_g+2m, ~r_i\equiv l_{L_i,d}^2+1 \mod 2~~ \forall i\in\{0,...,g\}.
\end{equation}

Choose $\undl x\in\pl_{\undl r,m}(Z,\undl L)$,
and denote by $\cl(Z,E,d,\undl L,\undl{x})$ the set of real holomorphic curves $f:\Sigma_g \to Z$
of genus $g$ such that $f(\Sigma_g)$ realizes the class $d$ in $Z$, does not contain $E$,
contains $\undl{x}$, and such that $f(\rb\Sigma_g)\subset L$.
Condition $(\ref{eq:alg-number-rel})$ implies that each $L_i$ contains a
connected component of $f(\rb\Sigma_g)$, and $\Sigma_g$ is a maximal real curve.
If $\undl x\in\pl_{\undl r,m}(Z,\undl L)$ is generic,
it follows from \cite[Proposition $2.1$]{ss2013} that the set
$\cl(Z,E,d,\undl L,\undl{x})$ is finite and composed of real irreducible immersed curves.
Then the integer
\begin{equation}\label{eq:rel-inv-alg}
W_{Z,\undl L,F}^E(d,\undl r)=\sum_{C\in\cl(Z,E,d,\undl L,\undl{x})}(-1)^{m_{L,F}(C)}
\end{equation}
depends neither on $\undl x$, nor on the deformation class of $(Z,d,\undl L,E,F)$
\cite[Theorem $3.6$]{brugalle2016}.

\begin{rem}
The integer $W_{Z,\undl L,F}^E(d,\undl r)$ defined by equation $(\ref{eq:rel-inv-alg})$
is an algebraic case of the relative invariant defined by \cite[Theorem $3.6$]{brugalle2016}.
Note that if we replace \cite[Lemma $3.1$]{brugalle2016} by \cite[Proposition $2.1$]{ss2013},
we can prove that the integer $W_{Z,\undl L,F}^E(d,\undl r)$ depends neither on $\undl x$,
nor on the deformation class of $(Z,d,\undl L,E,F)$ similarly to \cite[Theorem $3.6$]{brugalle2016}.
For the convenience of the readers, we recall two theorems
which play a key role in the proof of \cite[Theorem $3.6$]{brugalle2016}
in our situation.
\end{rem}

Suppose that $X_\rb$ and $Y_\rb$ are two real del Pezzo surfaces of degree $2$ and $Y_\rb \xrightarrow[]{S}  X_\rb$.
\begin{thm}{\cite[Theorem $3.7$]{brugalle2016}}\label{thm:corr-alg}
\begin{equation}\label{eq:thmcorr1-alg}
W_{X,\undl L,F}(d,\undl r)=\sum_{k\glt0}\binom{\frac{1}2 d\cdot [E]
  +2k}{k}W^{E}_{Z,\undl L,F}(d-2k[E],\undl r).
\end{equation}
\end{thm}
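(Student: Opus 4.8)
The plan is to establish (\ref{eq:thmcorr1-alg}) by degenerating $X_\rb$ through the mirror unscrew $\tilde\pi':\widetilde\yl\to\Delta$, whose generic real fiber $\widetilde\yl_t$ ($t\in\rb\Delta^*$) is deformation equivalent to $X_\rb$ and whose central fiber is the transverse union $\widetilde\yl_0=Z\cup Q$, with $Q=\cb P^1\times\cb P^1$ and $E=Z\cap Q$ the $(-2)$-curve satisfying $\rb E=\emptyset$. By the deformation invariance recalled in Section~\ref{subsec:hgwel-alg}, I may compute $W_{X,\undl L,F}(d,\undl r)$ on a nearby fiber $\widetilde\yl_t$ and then let $t\to0$. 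First I would specialize a generic real configuration $\undl x\in\pl_{\undl r,m}(X,\undl L)$ to a configuration $\undl x_0$ supported entirely on $Z\setminus E$ (the $r_i$ real points on the components $L_i\subset\rb Z$, together with the $m$ conjugate pairs in $Z\setminus\rb Z$), so that no point condition sits on $Q$ or on $E$.

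The second step is the analysis of the limit, as $t\to0$, of the real curves of class $d$ through $\undl x$. Each limit curve splits across the central fiber into a component $C_Z\subset Z$ carrying all the point conditions, together with rational ``caps'' $C_Q\subset Q$ that carry none; the two are glued along their common contacts with $E$, and the arithmetic genus $g$ and the maximality of the real curve are preserved throughout. The decisive bookkeeping is that $E$ is the vanishing cycle of the $A_1$-degeneration, so $[E]\mapsto0$ in $H_2(X;\zb)$; consequently the classes $d-2k[E]$ on $Z$, for every $k\glt0$, are all sent to the single class $d$ on $X$, and $C_Z$ realizes one such class $d-2k[E]$. Since $\rb E=\emptyset$, the intersections of any real curve with $E$ occur in conjugate pairs, and the transverse relative curve $C_Z$ of class $d-2k[E]$ meets $E$ in $(d-2k[E])\cdot[E]=d\cdot[E]+4k$ points, that is, in $\tfrac12 d\cdot[E]+2k$ conjugate pairs. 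Curves $C_Z$ of this shape through $\undl x_0$ are precisely those enumerated by $W^E_{Z,\undl L,F}(d-2k[E],\undl r)$. To turn this geometric picture into an equality of counts I would appeal to the properness and transversality of the relative problem furnished by \cite[Proposition~2.1]{ss2013}, exactly as in the proof of \cite[Theorem~3.6]{brugalle2016}, so that the limit curves are finite, immersed, free of hidden components, and the smoothing of the node of $Z\cup Q$ recovers each curve of $\widetilde\yl_t$ with the correct multiplicity.

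The main obstacle, and the combinatorial heart of the argument, is to evaluate the total contribution of the caps $C_Q$, weighted by the real signs $(-1)^{m_{L,F}}$, and to show that it collapses to $\binom{\tfrac12 d\cdot[E]+2k}{k}$. The mechanism is that each cap is a conjugation-invariant union of rulings of $Q$ meeting $E$ at a chosen sub-collection of the $\tfrac12 d\cdot[E]+2k$ contact pairs of $C_Z$: selecting the $k$ pairs at which caps are attached produces the binomial coefficient, while the quadratic base change $\tau=-t^2$ together with the conjugation-invariance of the rulings accounts for the even shift $d\mapsto d-2k[E]$. I would compute this local cap contribution once and verify that, because $\rb E=\emptyset$, every cap is unobstructed and contributes $+1$ to the real weight, so that on both sides of (\ref{eq:thmcorr1-alg}) the only surviving sign data are the $F$-intersection and the real isolated nodes of $C_Z$; summing the resulting coefficients over all $k\glt0$ then yields (\ref{eq:thmcorr1-alg}). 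Controlling these caps and their signs uniformly is the delicate point, and it is precisely where the hypotheses $\rb E=\emptyset$ and the monic log-del Pezzo condition on $(Z,E)$ are indispensable.
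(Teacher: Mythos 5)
Your overall strategy --- degenerate $X_\rb$ via the $(-3)$-mirror unscrew, specialize all point constraints into $Z\setminus E$, decompose the limit curves into a relative curve $C_Z$ in $Z$ plus rational caps in $Q=\cb P^1\times\cb P^1$, and extract the binomial weight from conjugate pairs on $E$ --- is indeed the route of the source that this paper quotes: the paper itself gives no proof of Theorem~\ref{thm:corr-alg}, recalling it from \cite[Theorem 3.7]{brugalle2016}, whose argument follows the degeneration scheme of \cite{bp2014}. However, two of your steps are wrong as stated. First, the homology bookkeeping: the class $[E]$ does \emph{not} map to $0$ in $H_2(X;\zb)$; under the relevant identification it corresponds to the class $[S]$ of the vanishing sphere, which satisfies $[S]^2=-2\neq 0$ (compare the paper's first Example, where $[S]=2[D]-\sum_{i=1}^{6}[E_i]$, and equation~\eqref{eq:relation-alg}, where the classes $d-(k+2l)[S]$ are pairwise distinct). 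If $[E]$ were sent to $0$, then all the invariants $W_{X,\undl L,F}(d-2k[E],\undl r)$ in Corollary~\ref{cor:corr-alg} would coincide and formula~\eqref{eq:gdf} would be vacuous. The correct statement is Lemma~\ref{lem:abv-alg}: the $Z$-part of a limit of class-$d$ curves carries $d-k[E]$ while the $Q$-part carries $kl_1+(d\cdot[E]+k)l_2$, and it is the glued configuration, not the class $d-2k[E]$ by itself, that deforms to class $d$ in the nearby fiber.

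Second, the cap combinatorics is off. In a stable limit, \emph{every} point of $C_Z\cap E$ must be a node of $\bar C$ matched by a component of $C_0$ (the second and third bullet conditions on $\cl^\cb(\widetilde\zl_0,d,g,\undl x(0))$), so rulings are attached at \emph{all} $\frac{1}{2}d\cdot[E]+2k$ conjugate pairs, not at ``the $k$ pairs at which caps are attached'' as you write; a configuration capped at only $k$ pairs violates the matching condition and is not a limit of curves in $\widetilde\yl_t$. The binomial $\binom{\frac{1}{2}d\cdot[E]+2k}{k}$ counts at which $k$ conjugate pairs the attached rulings have class $l_1$, the remaining pairs receiving $l_2$-rulings. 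Likewise, the evenness forcing $d\mapsto d-2k[E]$ is not produced by the base change $\tau=-t^2$: since $\rb Q=\emptyset$ (not merely $\rb E=\emptyset$; on the $Y$-side one also has $\rb E=\emptyset$ but $\rb Q=S^2$ with conjugation exchanging the rulings, which is exactly why the multiplicities there are $k2^{k-1}$ instead), the real structure on $Q$ is of non-exchange type, so conjugation sends the $l_1$-ruling through $q$ to the $l_1$-ruling through $\bar q$; real invariance therefore forces the $l_1$-caps to come in conjugate pairs, whence their number is even, say $2k$, and $C_Z$ has class $d-2k[E]$. With these corrections --- together with the dimension counts via \cite{ss2013} excluding components mapped into $E$, reducible or lower-genus $C_Z$, and tangent caps (possible here since no constraint lies on $Q$), and multiplicity one for each smoothing as in \cite{li2002,li2004} --- your plan does reassemble into the intended proof.
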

Using the convention
$$
\binom{-1}{0}=0,
$$
E. Brugall\'e \cite{brugalle2016} proved the following corollary.
\begin{cor}{\cite[Corollary $3.8$]{brugalle2016}}\label{cor:corr-alg}
The number $W^{E}_{Z,\undl L,F}(d,\undl r)$ can be expressed in terms of
the numbers $W_{X,\undl L,F}(d-2k[E],\undl r)$ with $k\glt0$.
More precisely, we have
$$
W^{E}_{Z,\undl L,F}(d,\undl r)=\sum_{k\glt 0}(-1)^{k}\left(
\binom{\frac{1}2 d\cdot [E] +k}{\frac{1}2 d\cdot [E]} +
\binom{\frac{1}2 d\cdot [E] +k-1}{\frac{1}2 d\cdot [E]}
\right) W_{X,\undl L,F}(d-2k[E],\undl r).
$$
\end{cor}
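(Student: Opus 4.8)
The plan is to read Corollary \ref{cor:corr-alg} as nothing more than the formal inversion of the triangular linear system provided by Theorem \ref{thm:corr-alg}, and then to reduce the whole statement to one binomial identity which I would settle by generating functions. Write $n=\tfrac12 d\cdot[E]$ and, for $j\glt 0$, set
\[
a_j=W_{X,\undl L,F}(d-2j[E],\undl r),\qquad b_j=W^{E}_{Z,\undl L,F}(d-2j[E],\undl r).
\]
Since $E$ is a smooth rational $(-2)$-curve with $K_Z\cdot E=0$, I have $[E]^2=-2$ and $c_1(Z)\cdot[E]=0$, so that $\tfrac12(d-2j[E])\cdot[E]=n+2j$. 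Applying Theorem \ref{thm:corr-alg} to the class $d-2j[E]$ and reindexing the resulting sum by $i=j+k$ then gives the system
\[
a_j=\sum_{i\glt j}\binom{n+2i}{\,i-j\,}\,b_i .
\]

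Next I would observe that this system is locally finite and unitriangular: its diagonal coefficient is $\binom{n+2j}{0}=1$, while $a_i=b_i=0$ once $i$ is large, because then $(d-2i[E])^2\to-\infty$ violates the positivity hypotheses (\ref{eq:alg-class-rel}) and no curves occur. Hence each $b_i$ is uniquely determined by the $a_k$, and it suffices to \emph{verify} that the proposed expression $b_0=\sum_{k\glt 0}(-1)^k c_k\,a_k$, with $c_k=\binom{n+k}{n}+\binom{n+k-1}{n}$, is the correct inverse. Substituting the displayed expansion of $a_k$ and collecting the coefficient of each $b_i$ reduces the corollary to the single identity
\[
S_i:=\sum_{k=0}^{i}(-1)^k\left(\binom{n+k}{n}+\binom{n+k-1}{n}\right)\binom{n+2i}{\,i-k\,}=\delta_{i,0}.
\]

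The only genuine computation is this identity, which I would prove by generating functions. Using $\binom{n+k}{n}=\binom{n+k}{k}$ and $\binom{n+k-1}{n}=\binom{n+k-1}{k-1}$ (the convention $\binom{-1}{0}=0$ being exactly what forces the $k=0$ term of the second binomial to vanish), one gets $\sum_{k\glt0}c_k x^k=\frac{1+x}{(1-x)^{n+1}}$, hence $\sum_{k\glt0}(-1)^k c_k x^k=\frac{1-x}{(1+x)^{n+1}}$. Writing $\binom{n+2i}{i-k}=[z^{i-k}](1+z)^{n+2i}$ and extending the $k$-sum to all $k\glt0$ (larger $k$ contribute only to powers above $z^i$), I obtain
\begin{align*}
S_i&=[z^i]\left((1+z)^{n+2i}\sum_{k\glt0}(-1)^k c_k z^k\right)
=[z^i]\left((1+z)^{n+2i}\cdot\frac{1-z}{(1+z)^{n+1}}\right)\\
&=[z^i]\,(1-z)(1+z)^{2i-1}
=\binom{2i-1}{i}-\binom{2i-1}{\,i-1\,}.
\end{align*}
For $i\glt1$ the two binomials agree by the symmetry $\binom{2i-1}{i}=\binom{2i-1}{(2i-1)-i}=\binom{2i-1}{i-1}$, so $S_i=0$; and $S_0=[z^0]\frac{1-z}{1+z}=1$. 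This is precisely $S_i=\delta_{i,0}$, completing the inversion and with it the corollary.

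I expect the main obstacle to be bookkeeping rather than depth: getting the reindexing $i=j+k$ and the relation $\tfrac12(d-2j[E])\cdot[E]=n+2j$ exactly right, since this is where $[E]^2=-2$ enters and where a stray factor of two would corrupt every coefficient. The second delicate point is the degenerate corner $k=0,\ n=0$, where the stated convention $\binom{-1}{0}=0$ is essential to force $c_0=1$ (and not $2$), matching the absence of a constant term in $\frac{x}{(1-x)^{n+1}}$; I would flag this explicitly so the generating-function identification of $\sum_k c_k x^k$ is unambiguous.
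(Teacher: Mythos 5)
Your proposal is correct, and it takes essentially the intended route: the paper itself offers no argument beyond citing \cite[Corollary $3.8$]{brugalle2016}, where the formula is likewise obtained by formally inverting the unitriangular system coming from Theorem \ref{thm:corr-alg} under the convention $\binom{-1}{0}=0$, and your bookkeeping is exactly right — $[E]^2=-2$ and $c_1(Z)\cdot[E]=0$ give $\tfrac12(d-2j[E])\cdot[E]=n+2j$, whence the coefficient $\binom{n+2i}{i-j}$ after setting $i=j+k$, with triangularity and the vanishing of $a_i,b_i$ for large $i$ (adjunction forces $(d-2i[E])^2\to-\infty$ while $c_1\cdot(d-2i[E])$ stays fixed) making the double-sum rearrangement legitimate. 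Your generating-function evaluation $S_i=[z^i]\,(1-z)(1+z)^{2i-1}=\binom{2i-1}{i}-\binom{2i-1}{i-1}=\delta_{i,0}$ is a valid formal power-series computation, and you correctly flag the one delicate corner, $n=0$ and $k=0$, where $\binom{-1}{0}=0$ is what makes $c_0=1$ and hence $\sum_{k\glt0}c_kx^k=\frac{1+x}{(1-x)^{n+1}}$ unambiguous.
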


Suppose $E$ is a smooth rational curve in $Z=\cb P^1\times\cb P^1$ representing
the class $l_1+l_2\in H_2(Z;\zb)$.
%
%
%
%
%
We need to investigate the composition of the elements of the set
$\cl^{(\alpha,\beta)}(Z,E,d,g,\undl x)$ with $|\undl x^\circ|\llt1$.
Note that the set $\cl^{(\alpha,\beta)}(Z,E,d,g,\undl x)$ is defined similarly to the above.
The following proposition is a higher genus version of \cite[Proposition $3.4$]{bp2014}.
\begin{prop}{\cite[Section $3$]{v2000}}\label{prop:hgp1p1-alg}
Suppose $Z=\cb P^1\times\cb P^1$, $[E]=l_1+l_2$ and $d\neq l[l_i]$ with $l\glt2$.
then the set $\cl^{(\alpha,\beta)}(Z,E,d,g,\undl x)$ with $|\undl x^\circ|\llt1$
and $d\neq0\in H_2(Z;\zb)$ is empty for a generic configuration $\undl x$,
except in the following situations:
\begin{itemize}
  \item $\cl^{(\delta_1,0)}(Z,E,l_i,0,\undl x_E)$, $\cl^{(0,\delta_1)}(Z,E,l_i,0,\{p\})$, $i=1,2$;
  \item $\cl^{(2\delta_1,0)}(Z,E,l_1+l_2,0,\{p\}\sqcup\undl x_E)$;
  \item $\cl^{(\delta_2,0)}(Z,E,l_1+l_2,0,\{p\}\sqcup\undl x_E)$.
\end{itemize}
Moreover each set of the above four consists of a unique element which is an embedding.
\end{prop}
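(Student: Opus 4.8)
The plan is to first turn the hypothesis $|\undl x^\circ|\llt1$ into a numerical constraint that leaves only finitely many types $(d,g,\alpha,\beta)$, and then to treat the survivors geometrically. Write $d=al_1+bl_2$ with $a,b\glt0$ and $(a,b)\neq(0,0)$. From $l_1^2=l_2^2=0$, $l_1\cdot l_2=1$ and $c_1(Z)=2l_1+2l_2$ one gets $c_1(Z)\cdot d=2(a+b)$ and $d\cdot[E]=a+b$, whence
\[
|\undl x^\circ|=c_1(Z)\cdot d-1+g-d\cdot[E]+|\beta|=(a+b)-1+g+|\beta|.
\]
Thus $|\undl x^\circ|\llt1$ is equivalent to $(a+b)+g+|\beta|\llt2$, to be read together with $a+b\glt1$ (since $d\neq0$), with the tangency relation $I\alpha+I\beta=a+b$, and with the hypothesis $d\neq l[l_i]$, $l\glt2$, which rules out $d=2l_1$ and $d=2l_2$.

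I would then enumerate. The inequality forces $g\llt1$ and $a+b\in\{1,2\}$. For $a+b=1$ we have $d=l_i$ and $g+|\beta|\llt1$, while $I\alpha+I\beta=1$ permits only $(\alpha,\beta)=(\delta_1,0)$ or $(0,\delta_1)$; this yields the types $(l_i,0,\delta_1,0)$, $(l_i,0,0,\delta_1)$ and the lone positive-genus type $(l_i,1,\delta_1,0)$. For $a+b=2$ we must have $g=|\beta|=0$, and after discarding $d=2l_i$ only $d=l_1+l_2$ with $I\alpha=2$ survives, i.e. $\alpha=2\delta_1$ or $\alpha=\delta_2$. Hence the candidate types are exactly the four listed families together with the spurious type $(l_i,1,\delta_1,0)$.

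Next I would dispose of the spurious type and verify the four families. For $(l_i,1,\delta_1,0)$ one has $|\undl x^\circ|=|\undl x_E|=1$; since $l_i$ is primitive, the image of any admissible $f$ is contained in the unique fiber $C$ of class $l_i$, so the prescribed point $p\in\undl x^\circ\subset Z\setminus E$ lies on $C$ and determines it. Then $C\cap E$ is a single fixed point, which for generic $\undl x$ is not the point of $\undl x_E$ demanded by $\alpha=\delta_1$; hence this set is empty. Each of the four remaining types has $g=0$, so $\Sigma_0=\cb P^1$: for $d=l_i$ the unique member is the fiber through the single prescribed point (on $E$ when $\alpha=\delta_1$, off $E$ when $\beta=\delta_1$), an embedded rational curve meeting $E$ once; for $d=l_1+l_2$ the members range over $|l_1+l_2|\cong\pb^3$, on which passage through a point is one condition and a simple contact with $E$ at a fixed point is two, so the three imposed conditions cut out a single curve, which is smooth---hence an embedded $\cb P^1$---with the prescribed contact profile for generic $\undl x$. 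This genus-$0$ content is precisely \cite[Proposition $3.4$]{bp2014} and \cite[Section $3$]{v2000}, which I would cite directly.

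The step I expect to be hardest is not the bookkeeping but the last verification for the family $(l_1+l_2,0,\delta_2,0)$: one must check that for generic $p_{1,1}\in E$ and $p\notin E$ the unique $(1,1)$-curve through $p$ and tangent to $E$ at $p_{1,1}$ is irreducible and smooth, meets $E$ with multiplicity exactly $2$ at $p_{1,1}$ and nowhere else, and in particular is not a union of two fibers. I would settle this by a transversality argument on the incidence variety of $(1,1)$-curves with prescribed contact order along $E$, exactly as in the genus-$0$ references.
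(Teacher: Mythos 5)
Your proposal is correct, but be aware that the paper itself contains no proof of this proposition: it is imported wholesale from \cite[Section 3]{v2000}, with the remark preceding it that it is a higher genus version of \cite[Proposition 3.4]{bp2014}. So the comparison is between your self-contained argument and those cited sources, and your route is the standard one, carried out completely. The computation $|\undl x^\circ|=(a+b)-1+g+|\beta|$ for $d=al_1+bl_2$ (using $c_1(Z)\cdot d=2(a+b)$ and $d\cdot[E]=a+b$) reduces everything to your finite list of types, and the only content beyond the genus $0$ statement of \cite[Proposition 3.4]{bp2014} is the exclusion of the extra type $(l_i,1,\delta_1,0)$, which you handle correctly: since $l_i$ is primitive, the unique non-contracted component of the source maps isomorphically onto the fiber through $p$, whose single intersection point with $E$ generically misses the prescribed point of $\undl x_E$ --- and this argument is insensitive to contracted components of the arithmetic genus $1$ source, so no stable-map subtlety arises. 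Two simplifications are worth recording against your sketch. First, the contact conditions with $E$ at fixed points are \emph{linear} conditions on the system $|l_1+l_2|\cong\pb^3$ (they are vanishing conditions on the Taylor expansion along the smooth curve $E$), so in both $(1,1)$ families the three imposed conditions cut out a unique curve by linear algebra; no transversality argument on an incidence variety is needed. Second, the step you flag as hardest is immediate from adjunction: the class $l_1+l_2$ has arithmetic genus $0$, so any irreducible member of the system is a smooth embedded $\cb P^1$, and reducibility is excluded because a reducible $(1,1)$-curve having total contact $2$ with $E$ at the single point $p_{1,1}$ (resp.\ at two prescribed points $q_1,q_2\in E$) must be the union of the two rulings through $p_{1,1}$ (resp.\ of rulings through $q_1$ and through $q_2$), and none of these finitely many reducible curves passes through a generic $p\in Z\setminus E$; the stated multiplicity profile then comes for free from $(l_1+l_2)\cdot[E]=2$. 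With these observations your plan closes completely, and it matches what \cite[Section 3]{v2000} and \cite[Proposition 3.4]{bp2014} provide.
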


\section{Genus decreasing formula of Welschinger invariants}\label{subsec:gdf-alg}
Suppose that $X_\rb$ and $Y_\rb$ are two real del Pezzo surfaces of degree $2$ and $Y_\rb \xrightarrow[]{S}  X_\rb$.
From Section $\ref{sec:nodal deg}$, we know $(Y,\tau_Y)$ (resp. $(X,\tau_X)$)
is a real del Pezzo surface of degree $2$ which is real deformation equivalent
to a real fiber $\widetilde\zl_t$ (resp. $\widetilde\yl_t$), $t\in\rb\Delta^*$, of the $3$-unscrew
$\tilde\pi:\widetilde\zl\to\Delta$ (resp. $(-3)$-mirror unscrew $\tilde\pi':\widetilde\yl\to\Delta$) .
In the central fiber $\widetilde\zl_0=Z\cup (\cb P^1\times\cb P^1)$,
$E=Z\cap (\cb P^1\times\cb P^1)$ is a smooth rational $(-2)$-curve in $Z$ such that
$(Z,E)$ is a nodal del Pezzo pair, and $\rb(\cb P^1\times\cb P^1)=S^2$.
$(Z,E)$ is also a monic log-del Pezzo pair.
Let $L=\cup_{i=0}^{g-1}L_i$ be a union of $g$ distinct connected components of $\rb X$,
$L_g=S\subset\rb Y$ be a real sphere disjoint from $L$,
$\undl L=(L_0,\ldots,L_{g-1},L_g)$ and $\undl L'=(L_0,\ldots,L_{g-1})$.

Let $d\in H_2(\widetilde\zl_t;\zb)$ be a homology class corresponding to
a big and nef, $L\cup S$-compatible divisor class $D\in\pic^\rb(\widetilde\zl_t)$.
Choose $\undl x(t)$ a set of $c_1(X)\cdot d+g-1$ real
holomorphic sections $\Delta\to\widetilde\zl$ such that $\undl x(0)\cap E=\emptyset$.
Let $\undl L_\sharp$ be the degeneration of $\undl L$ as $Y$ degenerates to $\widetilde\zl_0$.
Denote by $\cl^\cb(\widetilde\zl_0,d,g,\undl{x}(0))$
the set $\{\bar f:\bar C\to \widetilde\zl_0\}$ of limits, as stable maps,
of maps in $\cl^\cb(\widetilde\zl_t,d,g,\undl{x}(t))$ as $t$ goes to $0$.
Where $\cl^\cb(\widetilde\zl_t,d,g,\undl{x}(t))$
is the set of holomorphic maps $f:C \to \widetilde\zl_t$
with $C$ a connected algebraic curve of arithmetic genus $g$,
such that $f(C)$ realizes the class $d$ and contains $\undl{x}(t)$.
If $\widetilde\zl_t$ and $\undl x(t)$ is generic,
it follows from \cite[Lemma A$.3$]{shustin2015} that the set
$\cl^\cb(\widetilde\zl_t,d,g,\undl{x}(t))$ is finite and composed of irreducible immersed curves.
Let $\cl^\cb_*(\widetilde\zl_0,d,g,\undl{x}(0))=\{\bar f(\bar C)|
(\bar f:\bar C\to \widetilde\zl_0)\in \cl^\cb(\widetilde\zl_0,d,g,\undl{x}(0))\}$.
We know $\bar C$ is a connected nodal curve of arithmetic genus $g$ such that:

$\bullet$ $\undl x(0)\subset\bar f(\bar C)$;

$\bullet$ any point $p\in\bar f^{-1}(E)$ is a node of $\bar C$ which is the intersection of two
irreducible components $\bar C'$ and $\bar C''$ of $\bar C$, with $\bar f(\bar C')\subset Z$
and $\bar f(\bar C'')\subset\cb P^1\times\cb P^1$;

$\bullet$ if in addition neither $\bar f(\bar C')$ nor $\bar f(\bar C'')$ is entirely mapped into $E$,
then the multiplicities of intersection of both $\bar f(\bar C')$ and $\bar f(\bar C'')$ with $E$ are equal.

Given an element
$\bar f:\bar C\to \widetilde\zl_0$ of $\cl^\cb(\widetilde\zl_0,d,g,\undl x(0))$,
denote by $C_1$ (resp. $C_0$)
the union of the irreducible components of $\bar C$ mapped into $Z$ (resp. $\cb P^1\times\cb P^1$).

\begin{lem}\cite[Lemma $3.12$]{brugalle2016}\label{lem:abv-alg}
Given an element $\bar f:\bar C\to \widetilde\zl_0$ of $\cl^\cb(\widetilde\zl_0,d,g,\undl x(0))$,
there exists $k\in\zb_{\glt0}$ such that
$$
\bar f_*[C_1]=d-k[E]~\textrm{ and }~\bar f_*[C_0]=kl_1+(d\cdot[E]+k)l_2.
$$
Moreover $c_1(Z)\cdot\bar f_*[C_1]=c_1(\zl_\lambda)\cdot d$.
\end{lem}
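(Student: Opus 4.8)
The plan is to separate the statement into a homological part — determining $\bar f_*[C_1]$, the number $\bar f_*[C_0]\cdot[E]$ and the \emph{moreover} — and a genuinely geometric part distributing the degree of $\bar f_*[C_0]$ between the two rulings of $Q:=\cb P^1\times\cb P^1$. Write $d_1:=\bar f_*[C_1]\in H_2(Z;\zb)$ and $d_0:=\bar f_*[C_0]\in H_2(Q;\zb)$; effectivity gives $d_0=al_1+bl_2$ with $a,b\glt0$. Throughout I use the conservation of intersection indices in the flat family $\tilde\pi:\widetilde\zl\to\Delta$: for every line bundle $\Gamma$ on $\widetilde\zl$ the number $\Gamma\cdot[C_t]=d\cdot(\Gamma|_{\widetilde\zl_t})$ is independent of $t$, so letting $t\to0$ and using $C_1\subset Z$, $C_0\subset Q$ and the projection formula gives
\[
d\cdot(\Gamma|_{\widetilde\zl_t})=d_1\cdot(\Gamma|_Z)+d_0\cdot(\Gamma|_Q).
\]

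First I would apply this to $\Gamma=Q$. As $Q$ is the exceptional divisor of the blow-up of the $A_1$-point, its normal bundle is $\ol(-1,-1)$, so $Q|_Q=-[E]=-l_1-l_2$, $Q|_Z=[E]$ and $Q|_{\widetilde\zl_t}=0$; the conservation law becomes $d_1\cdot[E]=d_0\cdot[E]=a+b$, which also recovers the matching of contact orders listed before the lemma. The key structural fact is that every divisor on $\widetilde\zl$ restricts to a multiple of $[E]$ on $Q$ (equivalently $l_1$ and $l_2$ pair identically with all of $\pic(\widetilde\zl)$). Hence for $\Gamma$ with $\Gamma|_Z\in[E]^\perp$ one automatically has $\Gamma|_Q=0$, and the conservation law shows that $d_1$ and the class $d$ (read on $Z$ through the family) agree on $[E]^\perp$; as they can then differ only by an integer multiple of $E$, we get $d_1=d-k[E]$ for a unique $k\in\zb$, and with $[E]^2=-2$ this upgrades the matching relation to $a+b=d\cdot[E]+2k$. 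For the \emph{moreover} I would feed $\Gamma=-K_{\widetilde\zl}$ into the conservation law: adjunction gives $-K_{\widetilde\zl}|_{\widetilde\zl_t}=c_1(\widetilde\zl_t)$, $-K_{\widetilde\zl}|_Z=-(K_Z+E)$ and $-K_{\widetilde\zl}|_Q=l_1+l_2$, whence
\[
c_1(\widetilde\zl_t)\cdot d=-(K_Z+E)\cdot d_1+(l_1+l_2)\cdot d_0=\bigl(c_1(Z)\cdot d_1-d_1\cdot[E]\bigr)+(a+b)=c_1(Z)\cdot d_1,
\]
using $d_1\cdot[E]=a+b$; this is the claimed equality $c_1(Z)\cdot\bar f_*[C_1]=c_1(\zl_\lambda)\cdot d$.

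The remaining point — that the two ruling degrees are exactly $k$ and $d\cdot[E]+k$, giving $d_0=kl_1+(d\cdot[E]+k)l_2$ — is the main obstacle, and here homology is genuinely blind: by the structural fact above the conservation law never separates $a$ from $b$ and only ever sees $a+b=d\cdot[E]+2k$. To break the symmetry I would analyze the limit map $\bar f|_{C_0}$ into $Q$ directly. Since the marked points $\undl x(0)$ lie in $Z\setminus E$, the component $C_0$ passes through none of them, so each irreducible piece of $C_0$ is free of point constraints away from its nodes on $E$ and is therefore one of the rational curves classified in Proposition \ref{prop:hgp1p1-alg}. Combining the matching of contact orders at the nodes (the third bullet preceding the lemma) with the connectedness and arithmetic genus $g$ of $\bar C$, and with the predeformability of the limit in the local $A_1$-model $x_1^2+x_2^2+x_3^2=\tau$ after the base change $\tau=t^2$, one checks that the $d\cdot[E]$ transverse contacts of $C_1$ with $E$ force $d\cdot[E]$ rulings of one family, while the excess $2k=d_1\cdot[E]-d\cdot[E]$ produced by $E^2=-2$ can be realized only by $k$ matched pairs of the two families; this gives $a=k$, $b=d\cdot[E]+k$ and $k\glt0$, and a relabelling of $l_1,l_2$ (defined only up to interchange) yields the stated form. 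I expect this predeformability/smoothability check — excluding every other ruling distribution as an actual limit of irreducible curves $C_t\subset\widetilde\zl_t$ — to be the principal difficulty.
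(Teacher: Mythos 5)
The paper itself contains no proof of this lemma: it is imported verbatim as \cite[Lemma 3.12]{brugalle2016}, and Brugall\'e's argument there is purely homological. The first part of your proposal is correct and is essentially that argument: the matching condition along $E$ gives $d_1\cdot[E]=a+b$; comparing intersections with line bundles on $\widetilde\zl$ gives $d_1=d-k[E]$ and $a+b=d\cdot[E]+2k$; and your computation of the ``moreover'' via $-K_{\widetilde\zl}$, adjunction and $Z|_Z=-[E]$ is clean and correct (it can also be seen in one line from $c_1(Z)\cdot[E]=0$, $E$ being a rational $(-2)$-curve). Your observation that line bundles on $\widetilde\zl$ restrict to the quadric as multiples of the class dual to $l_1+l_2$ is also correct.

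The gap is in the final step, and your diagnosis that ``homology is genuinely blind'' is precisely where the proposal goes astray. First, the mechanism you propose cannot work: at every node of $\bar C$ lying over $E$, both rulings behave identically --- each of $l_1$, $l_2$ meets $E$ transversely in one point --- so predeformability in the local model, the matching of contact orders, connectedness and arithmetic genus are all invariant under swapping the ruling family of any single component of $C_0$; such a swap changes nothing local or combinatorial, only the homology class, by $\pm(l_1-l_2)$. Hence no deformation-theoretic or matching analysis of the kind you sketch can exclude the other distributions $(a,b)$ with $a+b=d\cdot[E]+2k$. Second, homology does see the difference: what your structural fact shows is only that \emph{divisor} classes cannot separate $l_1$ from $l_2$, but $l_1-l_2$ is a nonzero class in $H_2(\widetilde\zl;\zb)$, since $\widetilde\zl$ retracts onto $\widetilde\zl_0$ and Mayer--Vietoris gives $H_2(\widetilde\zl;\zb)\cong(H_2(Z)\oplus H_2(\cb P^1\times\cb P^1))/\langle [E]_Z-l_1-l_2\rangle$; it is detected by topological classes in $H^2(\widetilde\zl;\zb)$, namely pairs $(\gamma_Z,\gamma_Q)$ with $\gamma_Z\cdot[E]=\gamma_Q\cdot[E]$ in which $\gamma_Q$ is dual to a single ruling (these are not restrictions of line bundles, which is consistent with your structural fact). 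Since every limit satisfies $\bar f_*[C_1]+\bar f_*[C_0]=\sigma(d)$ in $H_2(\widetilde\zl;\zb)$ for the \emph{fixed} specialization class $\sigma(d)$ of $d$, the pair $(a-k,b-k)$ is the same for all limits, and identifying it as $(0,d\cdot[E])$, up to interchanging $l_1$ and $l_2$, is a one-time computation of $\sigma(d)$ --- exactly the identification of $H_2$ of the fiber with classes on $Z$ that is implicit when the lemma writes $d-k[E]\in H_2(Z;\zb)$; the bound $k\glt0$ then follows from effectivity, as you note. A further symptom that your route is the wrong frame: the lemma holds for \emph{every} element of the limit set with no genericity hypothesis, whereas your appeal to Proposition \ref{prop:hgp1p1-alg} requires a generic configuration and only classifies constrained rational curves, so at best it would prove a conditional version of a statement that is in fact purely homological.
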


\begin{lem}\cite[Corollary $5.4$]{bru2015}\label{lem:deg-alg}
Suppose that $d\neq l[E_i]$ with $l\glt2$, where $E_i$ is the exceptional divisor in $Z$,
then any irreducible component
of $\bar C$ entirely mapped to $E_i$ is isomorphically mapped to $E_i$.
\end{lem}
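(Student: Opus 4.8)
The plan is to argue by contradiction. Suppose that some irreducible component $C'$ of $\bar C$ is mapped onto $E_i$ with degree $\delta\glt2$. First I would record the geometry of the exceptional divisor: $E_i\cong\cb P^1$ is a $(-1)$-curve, so $E_i^2=-1$, $c_1(Z)\cdot[E_i]=1$, and its normal bundle is $N_{E_i/Z}\cong\ol_{\cb P^1}(-1)$; in particular $E_i$ is rigid, and for $m\glt1$ the linear system $|m[E_i]|$ contains no curve transverse to $E_i$. Since the configuration $\undl x(0)$ is generic and its points avoid $E$, a generic such configuration also avoids the proper subvariety $E_i\subset Z$, so $C'$ carries none of the $N=c_1(X)\cdot d+g-1$ incidence conditions.

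Next I would set up a dimension estimate on the stratum of stable maps containing such a component. By Lemma \ref{lem:abv-alg} the part $C_1$ of $\bar C$ landing in $Z$ realizes $d-k[E]$, and the hypothesis $d\neq l[E_i]$ guarantees that, after removing the components covering $E_i$, the remaining class is nonzero; hence $\bar C$ is not supported entirely on $E_i$, and $C'$ genuinely meets the rest of $\bar C$ at nodes. The key point is that a degree-$\delta$ cover $\bar f|_{C'}\colon C'\to E_i$ is superabundant: pulling back the normal bundle gives $\bar f^*N_{E_i/Z}\cong\ol_{C'}(-\delta\,\mathrm{pt})$, whence $h^1(C',\bar f^*N_{E_i/Z})\glt\delta-1>0$ (on a rational source this is exactly $\delta-1$, and a positive-genus source only increases it). This means the deformations of $C'$ inside $Z$ span strictly fewer moduli than the expected dimension $c_1(Z)\cdot\delta[E_i]+p_a(C')-1$ would predict, so the whole stratum drops below dimension $N$ once the point conditions are imposed.

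Finally I would conclude by comparing this with the fact, recalled in the excerpt, that $\cl^\cb(\widetilde\zl_t,d,g,\undl x(t))$ is finite of expected dimension $0$ for generic data, by \cite[Lemma A$.3$]{shustin2015} together with \cite[Proposition $2.1$]{ss2013} on the $Z$-side. A limit, as a stable map, of this maximal-dimensional family cannot fall into a stratum of strictly smaller dimension; hence the superabundant component $C'$ cannot occur, forcing $\delta=1$. A degree-one map of an irreducible component onto the smooth rational curve $E_i$ is birational and therefore maps its normalization isomorphically onto $E_i$, which is exactly the assertion.

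The hard part will be turning the per-component obstruction $h^1>0$ into a genuine dimension deficit for the entire stable map: one must verify that the extra freedom gained by letting the nodes $C'\cap(\bar C\setminus C')$ move along $E_i$ does not compensate for the moduli lost to the rigidity of $E_i$. Carrying this out rests on the matching conditions along $E$ packaged in Lemma \ref{lem:abv-alg} and on the genericity statements of \cite[Proposition $2.1$]{ss2013} and \cite[Lemma A$.3$]{shustin2015}; and it is precisely here that the hypothesis $d\neq l[E_i]$ is indispensable, since it rules out the degenerate situation in which $\bar C$ is a cover of $E_i$ alone, where the conclusion would manifestly fail.
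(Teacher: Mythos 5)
Your proposal cannot be compared line-by-line with an argument in the paper, because the paper contains none: Lemma \ref{lem:deg-alg} is quoted from \cite[Corollary $5.4$]{bru2015}, and the remark following it only observes that Brugall\'e's proof (given there for the plane blown up at points on a conic) transfers to the monic log-del Pezzo pair. Measured against that benchmark, your attempt has a genuine gap, in two places. First, the limiting principle you use to close the argument is false as stated: a limit, as a stable map, of a finite (zero-dimensional) family of maps in $\widetilde\zl_t$ can perfectly well land in a stratum of strictly smaller dimension --- boundary strata of smaller dimension are exactly where limits of one-parameter families live, and multiple covers of $(-1)$- and $(-2)$-curves notoriously do occur in such limits in general. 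Superabundance, $h^1(\bar f^*N_{E_i/Z})\glt\delta-1>0$, only says the moduli space is obstructed at such a map; it does not by itself forbid the map from arising as a limit. The correct mechanism, used throughout the paper (compare the proof of Proposition \ref{prop:dgfcplxmoduli-alg}) and in \cite{bru2015}, is that a stratum of candidate limit maps whose dimension is strictly smaller than the number of point conditions it must absorb cannot meet the generic configuration $\undl x(0)$ cut out by generic sections $\Delta\to\widetilde\zl$.

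Second, the pivotal inequality --- that the stratum containing a degree-$\delta$ cover of $E_i$, $\delta\glt2$, falls below the number of point constraints --- is asserted but never derived, and your closing paragraph concedes precisely this (``the hard part will be turning the per-component obstruction $h^1>0$ into a genuine dimension deficit''). That bookkeeping is the entire content of the lemma, and it is not naturally run through the normal bundle at all: the covering components carry none of the point conditions (generic $\undl x(0)$ avoids $E_i$, as you note), while the complementary part $\bar C^\dagger$ of $\bar C$ realizes a class whose pairing with $c_1(Z)$ drops by $\delta$ (since $c_1(Z)\cdot[E_i]=1$); against this one must weigh the arithmetic genus gained at the nodes where the cover attaches, whose number is controlled by positivity of intersection of $\bar f(\bar C^\dagger)$ with $E_i$, together with the matching conditions along $E$ from Lemma \ref{lem:abv-alg} (note $E_i\cdot E=1$ for the six divisors meeting $E$, an interaction your sketch ignores entirely). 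Carrying this trade-off through yields a deficit of at least $\delta-1>0$, contradicting genericity; the hypothesis $d\neq l[E_i]$ enters exactly where you placed it, guaranteeing $\bar C^\dagger\neq\emptyset$ so the count applies (if $d=l[E_i]$ the conclusion genuinely fails). As it stands, then, your text is a plausible plan with the decisive estimate missing and the contradiction mechanism misstated; incidentally, the aside that $|m[E_i]|$ ``contains no curve transverse to $E_i$'' is vacuous, since every curve in that class is supported on $E_i$.
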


\begin{rem}
Note that \cite[Corollary $5.4$]{bru2015} is proved when $Z$ is the plane blow up at
$n$ points on a smooth conic. For the monic log-del Pezzo pair $(Z,E)$,
Lemma $\ref{lem:deg-alg}$ can be proved similarly to
\cite[Corollary $5.4$]{bru2015}, so we omit it here.
\end{rem}

\begin{prop}\label{prop:dgfcplxmoduli-alg}
Assume that $\undl x(0)\cap(\cb P^1\times\cb P^1)=\{p\}$ and
$\undl x(0)\cap Z\neq\emptyset$. Then for a generic configuration $\undl x(0)$,
the set $\cl^\cb_*(\widetilde\zl_0,d,g,\undl x(0))$ is finite,
and only depends on $\undl x(0)$.
Moreover for an element $\bar f:\bar C\to \widetilde\zl_0$ of $\cl^\cb(\widetilde\zl_0,d,g,\undl x(0))$,
there is no irreducible component of $\bar C$ is entirely mapped into $E$.

If in addition that there is no component $\bar C^*$ of $\bar C$ such that
$\bar f(\bar C^*)=lE_i$ in $Z$ with $l\glt2$, then
the image of the irreducible component $\bar C'$ of $C_0$
whose image contains $p$ realizes either a class $l_i$ or the class $l_1+l_2$,
while any other irreducible component of $C_0$ realizes a class $l_i$.
\begin{enumerate}
  \item If $\bar f_*[\bar C']=l_i$, then the curve $C_1$ is irreducible with genus $g$,
  and $\bar f_{|C_1}$ is an element of
  $\cl^{(\delta_1,(d\cdot[E]+2k-1)\delta_1)}(Z,E,d-k[E],g,(\undl x(0)\cap Z)\cup\undl x_E)$,
  where $\undl x_E=\bar f(\bar C')\cap E$. The map $\bar f$ is the limit of a
  unique element of $\cl^\cb(\widetilde\zl_t,d,g,\undl x(t))$ as $t$ goes to $0$.
  \item If $\bar f_*[\bar C']=l_1+l_2$, then $\bar f_{|\bar C'}$ is an element of
  $\cl^{(\alpha,0)}(\cb P^1\times\cb P^1,E,l_1+l_2,0,\{p\}\cup\undl x_E)$, where
  $\undl x_E\subset\bar f(C_1)\cap E$, and $\alpha=2\delta_1$ or $\alpha=\delta_2$.
  In the former case, the curve $C_1$ either has two irreducible components $C_1'$, $C_1''$
  with $g(C_1')+g(C_1'')=g$ or is irreducible with genus $g-1$,
  and $\bar f$ is the limit of a unique element of
  $\cl^\cb(\widetilde\zl_t,d,g,\undl x(t))$ as $t$ goes to $0$;
  in the latter case, the curve $C_1$ is irreducible, and $\bar f$ is the limit of
  exactly two elements of $\cl^\cb(\widetilde\zl_t,d,g,\undl x(t))$ as $t$ goes to $0$.
\end{enumerate}
\end{prop}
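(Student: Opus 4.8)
The plan is to follow the strategy of \cite[Proposition $3.4$]{bp2014}, upgrading it to arbitrary genus. First I would fix an element $\bar f:\bar C\to\widetilde\zl_0$ of $\cl^\cb(\widetilde\zl_0,d,g,\undl x(0))$ and record the numerical data: by Lemma \ref{lem:abv-alg} there is a $k\in\zb_{\glt0}$ with $\bar f_*[C_1]=d-k[E]$, $\bar f_*[C_0]=kl_1+(d\cdot[E]+k)l_2$, and $c_1(Z)\cdot\bar f_*[C_1]=c_1(\zl_\lambda)\cdot d$. Since $|\undl x(0)|=c_1(X)\cdot d+g-1$ with exactly the one point $p$ on $\cb P^1\times\cb P^1$ and the remaining $c_1(X)\cdot d+g-2$ points on $Z$, the number of constraints on each side is pinned down. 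I would then observe that both sides meet $E$ with the same total multiplicity $(d-k[E])\cdot[E]=d\cdot[E]+2k=(kl_1+(d\cdot[E]+k)l_2)\cdot(l_1+l_2)$, which is the matching condition used throughout.

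The finiteness of $\cl^\cb_*(\widetilde\zl_0,d,g,\undl x(0))$ and its dependence only on $\undl x(0)$ I would get from the finiteness of $\cl^\cb(\widetilde\zl_t,\ldots)$ (\cite[Lemma A$.3$]{shustin2015}) together with the properness of the family $\widetilde\zl\to\Delta$: a sequence of maps over $t\to0$ has, after passing to a subsequence, a unique stable limit, so the limit set is finite and determined by the chosen sections $\undl x(t)$, hence by $\undl x(0)$. To exclude components mapped entirely into $E$, I would run a dimension count: a component covering $E\cong\cb P^1$ carries no point of the generic $\undl x(0)$ (recall $\undl x(0)\cap E=\emptyset$) and, together with the matching conditions, would force a positive-dimensional family of limits, contradicting genericity; this is the step where $(Z,E)$ being a monic log-del Pezzo pair, hence $-(K_Z+E)$ nef, is used to bound the contributions along $E$.

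Next I would analyse the quadric side. Each connected component of $C_0$ is an element of a space $\cl^{(\alpha,\beta)}(\cb P^1\times\cb P^1,E,\cdot,\cdot,\cdot)$, and since $p$ is the only marked point on the quadric, the component $\bar C'$ through $p$ has $|\undl x^\circ|=1$ while every other component has $|\undl x^\circ|=0$. Proposition \ref{prop:hgp1p1-alg} then leaves only the listed possibilities: $\bar f_*[\bar C']\in\{l_i,\,l_1+l_2\}$ and every other component of $C_0$ is a fiber of class $l_i$, each such space consisting of a single embedding. The additional hypothesis together with Lemma \ref{lem:deg-alg} rules out multiple covers $lE_i$, $l\glt2$. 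The genus statements then follow from the arithmetic-genus identity $p_a(\bar C)=\sum g_i+\delta-\nu+1=g$ for the connected nodal curve $\bar C$, where $\delta$ counts the nodes over $E$ and $\nu$ the irreducible components: in Case (1) all nodes over $E$ are transverse and $C_1$ stays irreducible of genus $g$, with $\bar f|_{C_1}$ forced into $\cl^{(\delta_1,(d\cdot[E]+2k-1)\delta_1)}(Z,E,d-k[E],g,\cdot)$ since $\bar C'$ gives the one constrained transverse point $\undl x_E=\bar f(\bar C')\cap E$ and the remaining fibers give $d\cdot[E]+2k-1$ free transverse ones; in Case (2) the class $l_1+l_2$ of $\bar C'$ meets $E$ either in two transverse points (profile $2\delta_1$) or in one double point (profile $\delta_2$), and the loop this creates forces, respectively, $C_1$ irreducible of genus $g-1$ or split as $C_1'\cup C_1''$ with $g(C_1')+g(C_1'')=g$, resp.\ $C_1$ irreducible.

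Finally I would count smoothings. The number of elements of $\cl^\cb(\widetilde\zl_t,d,g,\undl x(t))$ limiting to a given $\bar f$ is, by the local gluing analysis at the nodes lying over $E$ (predeformability, in the style of \cite{ip2004,li2002}), the product of the intersection multiplicities of $C_1$ and $C_0$ with $E$ at those nodes. When every such multiplicity equals $1$—Case (1) and the $2\delta_1$ subcase of Case (2)—there is a unique smoothing, so $\bar f$ is the limit of a unique family member; when there is a single tangency of multiplicity $2$—the $\delta_2$ subcase—there are exactly two smoothings, giving the factor two. I expect the main obstacle to be exactly this last step: making the smoothing count rigorous requires a careful local model of $\widetilde\zl\to\Delta$ near the node of $\zl_0$ and a transversality argument matching the root data at each tangency point, and it is here that the genus drop and the factor two both have to be extracted and reconciled with the arithmetic-genus bookkeeping of the previous step.
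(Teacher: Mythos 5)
Your proposal is correct and follows essentially the same route as the paper: Lemma \ref{lem:abv-alg} for the homology bookkeeping, Proposition \ref{prop:hgp1p1-alg} for the quadric side, a constraint/dimension count (the paper phrases it as $\bar f_{|C_1}$ being constrained by $c_1(X)\cdot d-2+g$ points, yielding exactly your four configurations), and the degeneration-formula multiplicities of \cite{li2002,li2004} for the smoothing count. The only differences are organizational: the paper cites \cite[Proposition 2.1]{ss2013} for finiteness and \cite[Example 11.4, Lemma 14.6]{ip2004} to exclude components mapped into $E$ where you sketch compactness and a dimension count, and it settles the genus cases by the point-constraint count on $C_1$ (plus an adjunction argument excluding a genus-$1$ component in class $l_i$) rather than by your arithmetic-genus identity, which amounts to the same bookkeeping.
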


\begin{proof}
Note that Proposition $\ref{prop:dgfcplxmoduli-alg}$
is the algebraic and higher genus version of \cite[Proposition $3.7$]{bp2014}.
From \cite[Proposition $2.1$]{ss2013} and Proposition $\ref{prop:hgp1p1-alg}$,
we can prove the set  $\cl^\cb_*(\widetilde\zl_0,d,g,\undl x(0))$ is finite
and only depends on $\undl x(0)$ by the standard dimension estimation.
Example $11.4$ and Lemma $14.6$ in \cite{ip2004} implies that no component
of $\bar C$ is entirely mapped into $E$.

Since $\undl x(0)\cap (\cb P^1\times\cb P^1)=\{p\}$ and $\undl x(0)\cap Z\neq\emptyset$,
the map $\bar f_{|C_1}$ is constrained by
$c_1(Z)\cdot (d-k[E])-2+g=c_1(X)\cdot d-2+g$ points in $Z$.
Hence we have the following cases:
\begin{enumerate}
  \item The curve $C_1$ is irreducible with genus $g$, and $\bar f^{-1}(E)$ consists in
  $d\cdot[E]+2k$ distinct points. The curve $C_0$ must have $d\cdot[E]+2k$ irreducible
  components with genus $0$, and the image of any of them realizes $l_i$.
  $\bar f_{|C_1}$ is also constrained by the point $\bar f(\bar C')\cap E$.
  \item $C_1$ is irreducible with genus $g-1$. $\bar f(C_1)$ intersects $E$
  transversely in $d\cdot[E]+2k$ distinct points. By adjunction formula,
  there is no genus $1$ curve represents $l_i$
  passing two fixed distinct points in $\cb P^1\times\cb P^1$.
  The curve $C_0$ must have $d\cdot[E]+2k-1$ irreducible
  components with genus $0$, one of them, say $\bar C''$, intersecting two distinct points of
  $\bar f^{-1}(E)$. $\bar f(\bar C'')$ has to realize the class $l_1+l_2$,
  and the image of any other irreducible component of $C_0$ realizes a class $l_i$.
  All the latter components are constrained by $\bar f(C_1)\cap E$, we have $\bar C'=\bar C''$.
  \item The curve $C_1$ is irreducible with genus $g$, and $\bar f^{-1}(E)$ consists in
  $d\cdot[E]+2k-1$ distinct points. The curve $C_0$ must have $d\cdot[E]+2k-1$ irreducible
  components with genus $0$. The image of one of them being tangent to $E$.
  As in the case $(2)$, we know this component must be $\bar C'$ whose image
  realizes $l_1+l_2$, and the image of any other component of $C_0$ realizes a class $l_i$.
  \item $C_1$ has two irreducible components, say $C_1'$, $C_1''$, with genus $g_1$, $g_2$,
  respectively and $g_1+g_2=g$. $\bar f(C_1)$ intersects $E$ transversely in $d\cdot[E]+2k$
  distinct points. $C_0$ must have exactly $d\cdot[E]+2k-1$ irreducible components with genus $0$,
  and the image of one of them intersecting the two components of $C_1$.
  As in the case $(2)$, we know this component must be $\bar C'$ whose image
  realizes $l_1+l_2$, and the image of any other component of $C_0$ realizes a class $l_i$.
\end{enumerate}

The statement about the number of elements of $\cl^\cb(\widetilde\zl_t,d,g,\undl x(t))$
converging to $\bar f$ as $t$ goes to $0$ follows from \cite{li2002,li2004}.
\end{proof}

Before giving the proof of Theorem $\ref{thm:gdf-alg}$,
we recall some definitions of the moduli spaces.
If the set $\undl x(t)$ of $c_1(X)\cdot d+g-1$ real holomorphic sections
$\Delta\to\widetilde\zl$ with $\undl x(0)\cap E=\emptyset$ is chosen such that
$\undl x(t)$ satisfies the additional condition $(\ref{eq:alg-number})$.
Denote by $\cl(\widetilde\zl_0,d,\undl L_\sharp,\undl{x}(0))$
the set $\{\bar f:\bar C\to \widetilde\zl_0\}$ of limits, as stable maps,
of maps in $\cl(\widetilde\zl_t,d,\undl L,\undl{x}(t))$
(see Section $\ref{subsec:hgwel-alg}$).
$\cl(\widetilde\zl_t,d,\undl L,\undl{x}(t))$
(resp. $\cl(Z,E,d,\undl L',\undl{x}(t))$ (see Section $\ref{subsec:rel wel-alg}$))
is a subset of $\rb\cl^\cb(\widetilde\zl_t,d,g,\undl{x}(t))$
(resp. $\rb\cl^{(0,d\cdot[E]\delta_1)}(Z,E,d,g-1,\undl x(t))$
which is the set consisting of real elements of $\cl^\cb(\widetilde\zl_t,d,g,\undl{x}(t))$
(resp. $\cl^{(0,d\cdot[E]\delta_1)}(Z,E,d,g-1,\undl x(t))$).

By using a real version of Proposition $\ref{prop:dgfcplxmoduli-alg}$,
we can prove Theorem $\ref{thm:gdf-alg}$ immediately.

\begin{figure}[h!]
\begin{center}
\begin{tabular}{cccc}
\includegraphics[width=3cm, angle=0]{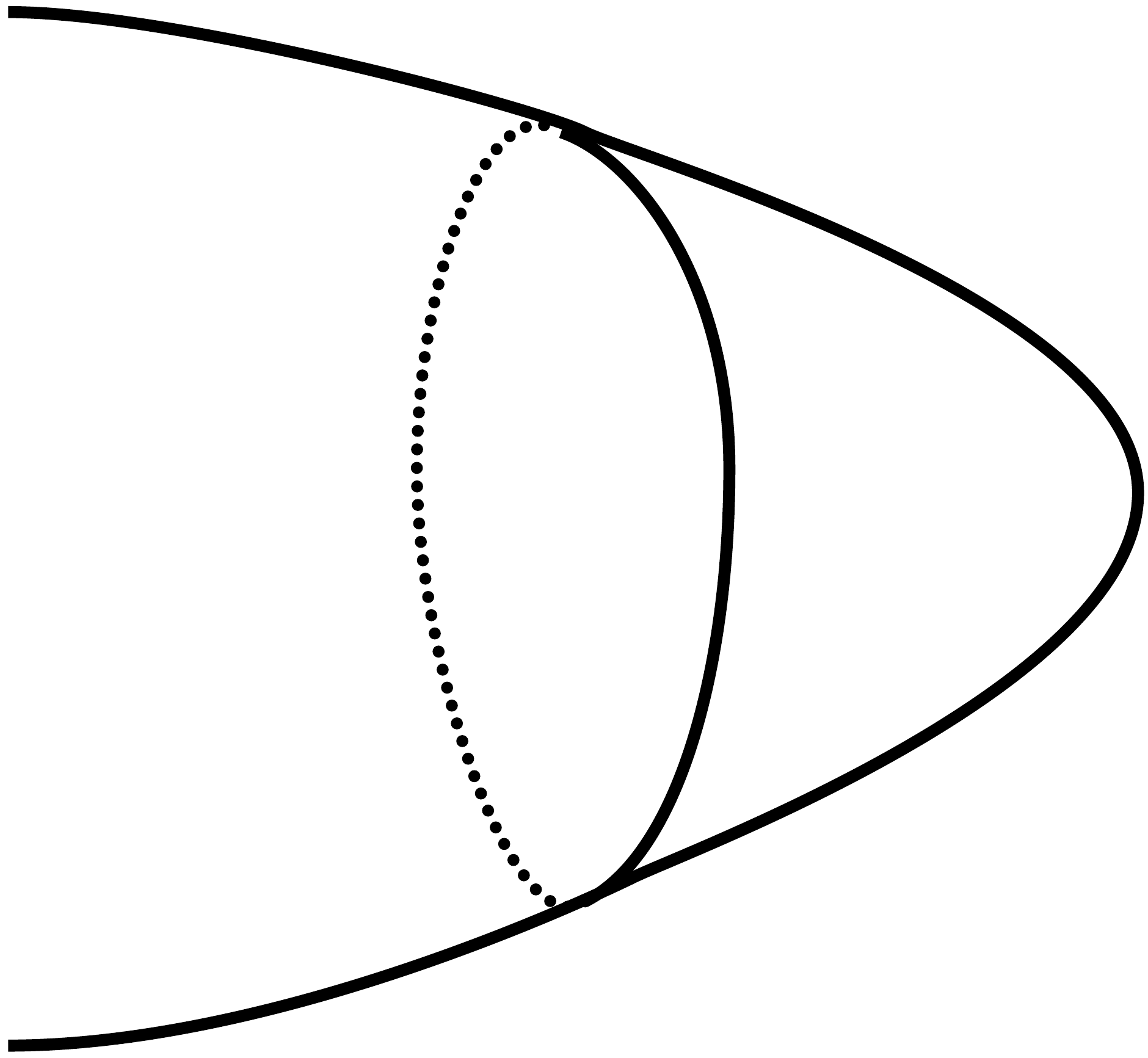}
\put(-50, 80){$L_i$}
~
\includegraphics[width=2.5cm, angle=0]{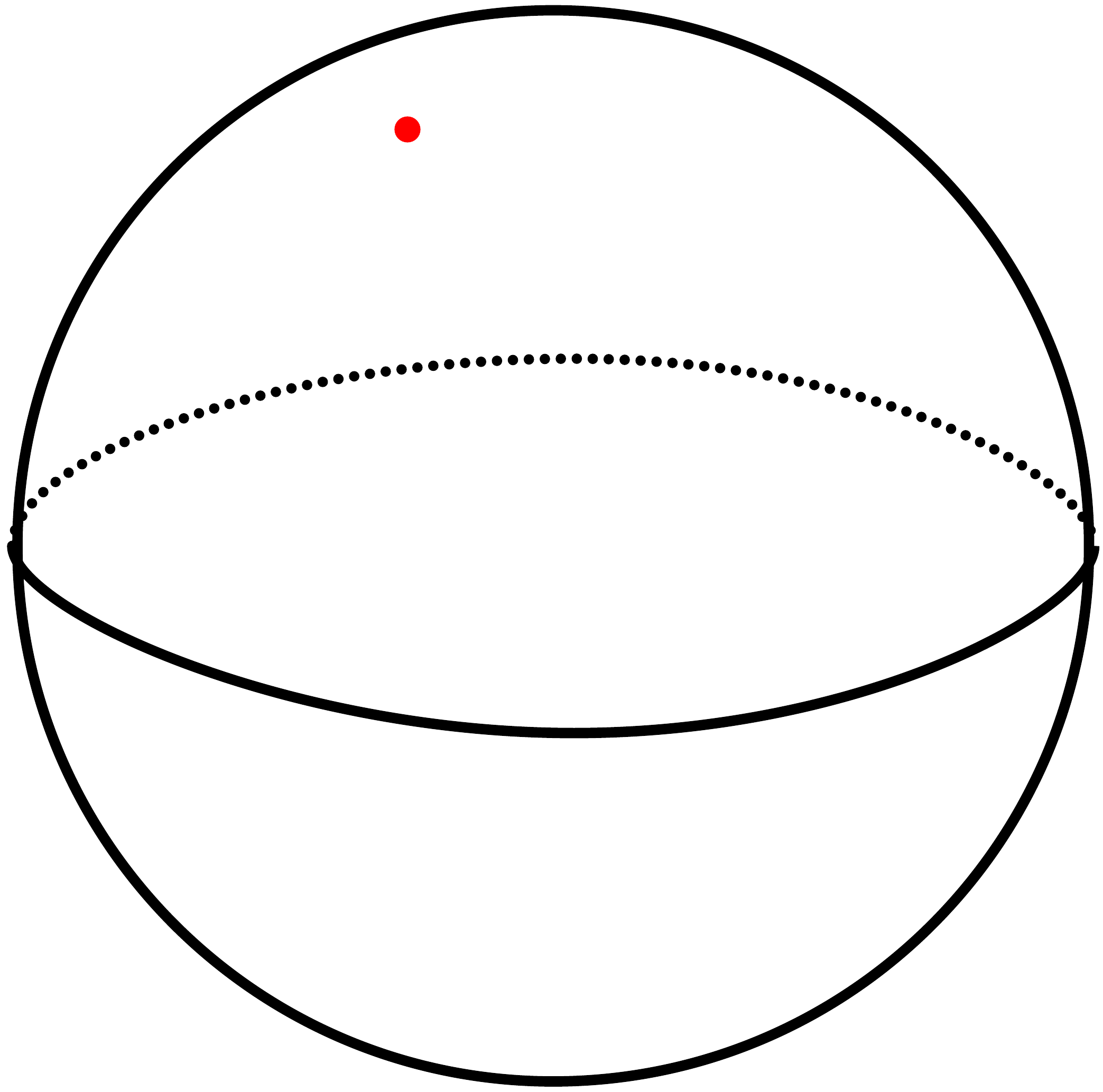}
\put(-40, 80){$S$}
& \hspace{10ex} &
\includegraphics[width=3cm, angle=0]{R_L.pdf}
\put(-50, 80){$L_i$}
~
\includegraphics[width=2.5cm, angle=0]{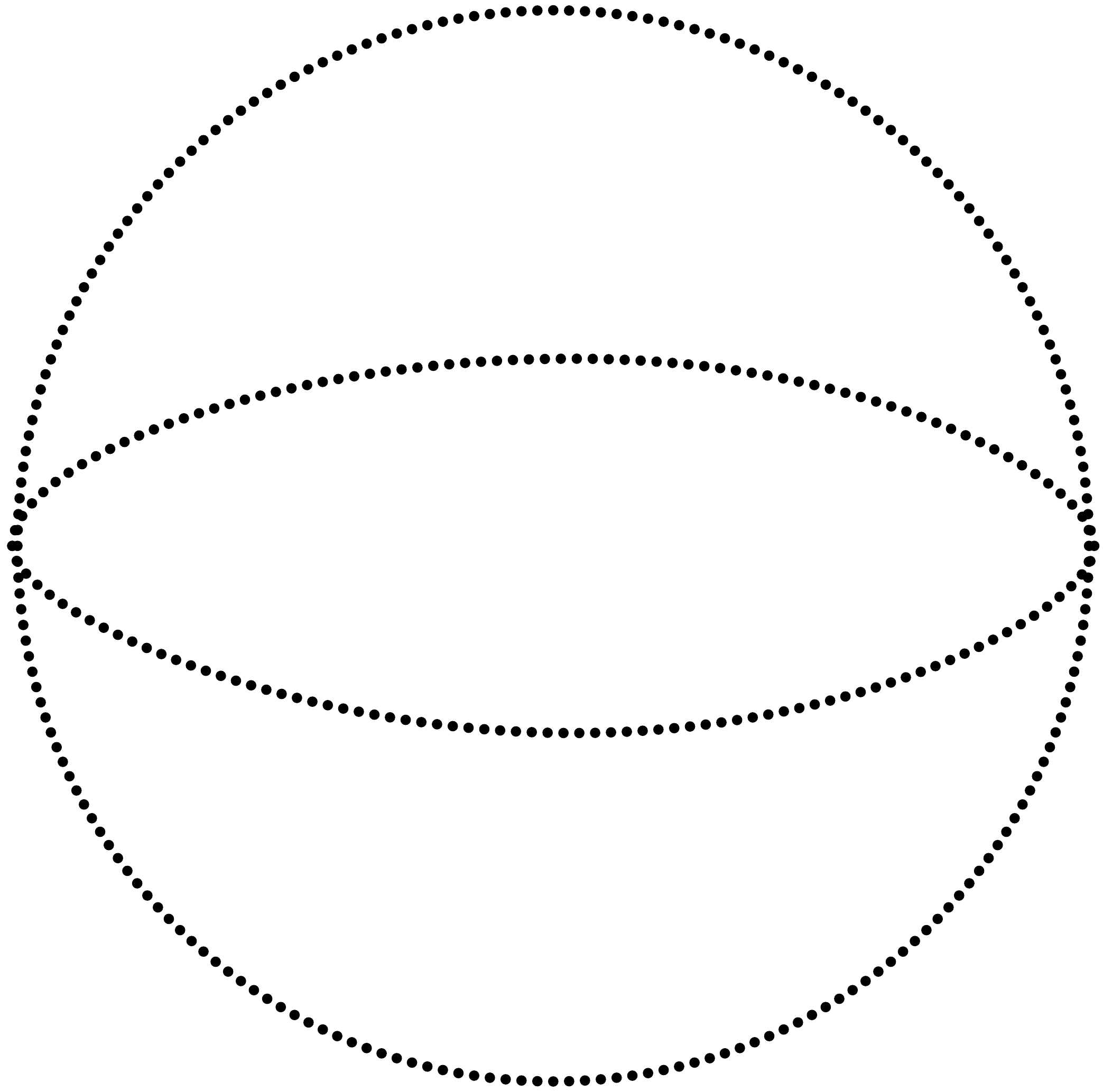}
\put(-40, 80){$S$}
\\ \\a) Part of $\rb Y$ && b) Part of $\rb X$
\end{tabular}
\end{center}
\caption{Real surgery: $Y_\rb \xrightarrow[]{S}  X_\rb$}
\label{fig:real surgery-alg}
\end{figure}

\begin{proof}[proof of Theorem $\ref{thm:gdf-alg}$]
Since the case of any real del Pezzo surface of degree $\glt3$ can be reduced to
that of degree $2$ by blowing up suitable real points, we only consider the degree $2$ case here.
According to \cite[Proposition $4.2$]{iks2013a}, we have $d\cdot[S]=0$.
Since the real configuration $\undl x$ satisfies the condition $(\ref{eq:alg-number})$
and $|S\cap\undl x|=1$ (see Figure $\ref{fig:real surgery-alg}$).
We have $\undl x(0)\cap(\cb P^1\times\cb P^1)=\{p\}$,
$\undl x(0)\cap Z\neq\emptyset$.
Choose a real element $\bar f:\bar C\to \widetilde\zl_0$ of
$\cl(\widetilde\zl_0,d,\undl L_\sharp,\undl{x}(0))\subset
\rb\cl^\cb(\widetilde\zl_0,d,g,\undl{x}(0))$.
From Proposition $\ref{prop:dgfcplxmoduli-alg}$, we know the curve $\bar f_{|C_1}$ has four possibilities.
Since $\rb E=\emptyset$, we can get that the intersection points of $E$ and $\bar f(\bar C)$
are only conjugated pairs. The existence of real curve in cases $(1)$ and $(3)$
of the proof of Proposition $\ref{prop:dgfcplxmoduli-alg}$ requires the non-emptiness
of real points of the intersection of $E$ and $\bar f(\bar C)$ (\textit{i.e.} the point belonged to $\bar f(\bar C')\cap E$).
Thus the curve $C_1$ either has two irreducible components $C_1'$, $C_1''$
with $g(C_1')+g(C_1'')=g$ or is irreducible with genus $g-1$.
Condition $(\ref{eq:alg-number})$ implies $C_1$ has non-empty real part.
If the curve $C_1$ has two irreducible components $C_1'$, $C_1''$ with
$g(C_1')+g(C_1'')=g$, and $\bar f:\bar C\to \widetilde\zl_0$ is a real curve.
The two points of intersection of $\bar f(\bar C')$
and $\bar f(C_1)$ (\textit{i.e.} the points belonged to
$(\bar f(\bar C')\cap\bar f(C_1'))\cup(\bar f(\bar C')\cap\bar f(C_1''))$) must be a pair of conjugated points.
This implies that $\bar f(C_1')$ and $\bar f(C_1'')$ are two conjugated components
which contradicts the non-emptiness of the real part.
Thus $\bar f(C_1)$ has to be a real element of
$\cl(Z,E,d-k[E],\undl L',\undl x(0)\cap Z)\subset
\rb\cl^{(0,2k\delta_1)}(Z,E,d-k[E],g-1,\undl x(0)\cap Z)$.
$\bar f(\bar C')$ realizes the class $l_1+l_2$,
and the image of any other irreducible component of $C_0$ realizes a class $l_i$.
So each curve $\bar f(C_1)$ has $k\cdot2^{k-1}$ possibilities to form a real element
$\bar f:\bar C\to \widetilde\zl_0$ of $\cl(\widetilde\zl_0,d,\undl L_\sharp,\undl{x}(0))$,
and
$$
m_{L\cup S,F}(\bar f)=m_{L,F}(\bar f_{|C_1})+k-1.
$$
We can get
\begin{equation*}
W_{Y,\undl L,F}(d,\undl r)=\sum_{k\glt1}\left((-1)^{k-1}k2^{k-1}\right)
\sum_{C_1\in\cl(Z,E,d-k[E],\undl L',\undl x(0)\cap Z)}(-1)^{m_{L,F}(C_1)}.
\end{equation*}
From the definition of relative Welschinger invariants (see equation $(\ref{eq:rel-inv-alg})$), we have
\begin{equation}\label{eq:pfthm1-alg}
W_{Y,\undl L,F}(d,\undl r)=\sum_{k\glt1}\left((-1)^{k-1}k2^{k-1}\right)
W_{Z,\undl L',F}^{E}(d-k[E],\undl r'),
\end{equation}
where $\undl r'=(r_0,\ldots,r_{g-1})$.
We use the convention
$$
\binom{-1}{0}=0.
$$
From Corollary $\ref{cor:corr-alg}$, we have
\begin{equation}\label{eq:relation-alg}
W_{Y,\undl L,F}(d,\undl r)=\sum_{k\glt1,l\glt0}(-1)^{l+k-1}k2^{k-1}
\left(\binom{k+l}{k} +\binom{k+l-1}{k}
\right)W_{X,\undl L',F}(d-(k+2l)[S],\undl r').
\end{equation}
The total coefficient of the invariant $W_{X,\undl L',F}(d-i[S],\undl r')$
appearing in the sum is equal to $1$ if $i=1$,
and is equal to $-4$ if $i=2$.
If $i\glt3$, the total coefficient of $W_{X,\undl L',F}(d-i[S],\undl r')$
appearing in the equation $(\ref{eq:relation-alg})$ is
\begin{align*}
&\sum_{k+2l=i}(-1)^{l+k-1}k2^{k-1}
\left(\binom{k+l}{k} +\binom{k+l-1}{k}
\right)\\
&=(-1)^{i-1}\sum_{k+2l=i}(-1)^{l}k2^{k-1}
\binom{k+l}{k}
-(-1)^{i-1}\sum_{k+2l_1=i-2}(-1)^{l_1}k2^{k-1}
\binom{k+l_1}{k}\\
&=(-1)^{i-1}(u_i-u_{i-2}),
\end{align*}
where
$$
u_i=\sum_{k+2l=i}(-1)^{l}k2^{k-1}\binom{k+l}{k}.
$$
Note that $u_1=1$, $u_2=4$.
Let
$$
v_i=\sum_{k+2l=i}(-1)^l2^k\binom{k+l}{k}.
$$
By Pascal's rule, we have
$$
u_{i+2}-u_{i+1}=u_{i+1}-u_{i}+v_{i+1}, i\glt1.
$$
From the proof of \cite[Theorem $1.1$]{brugalle2016},
we know $v_{i+1}=i+2$.
Hence we have
$$
u_i-u_{i-2}=i^2 ~\text{ for } i\glt3.
$$
Therefore the total coefficient of $W_{X,\undl L',F}(d-i[S],\undl r')$ appearing
in the sum is $(-1)^{i-1}i^2$ for $i\glt1$.
\end{proof}

In the following, we will give some computations of higher genus Welschinger invariants.

\begin{exa}
Let $Y_\rb$ be a real del Pezzo surface with degree $3$ whose real part
is homeomorphic to the disjoint union of a sphere $S$ and a real projective plane $\rb P^2$.
Let $X_\rb$ be the real surgery of $Y_\rb$ along $S$.
From the classification of real cubic surfaces (c.f. \cite{dk00,dk02,kollar97,sil1989}),
we may consider $X_\rb$ as $\cb P^2_{0,3}$, where $\cb P^2_{0,3}$ is the blow-up of $\cb P^2$
at $3$ pairs of complex conjugated points.
The genus $0$ Welschinger invariants of $Y_\rb$ is calculated in \cite[Section $4.1$]{brugalle2016}.
In the following, we will compute some genus $1$ Welschinger invariants of $Y_\rb$.
In $H_2(X;\zb)$, we may assume:
$$
c_1(X)=3[D]-\sum_{i=1}^{6}[E_i]~\text{and}~
[S]=2[D]-\sum_{i=1}^{6}[E_i],
$$
where $D$ is the pull back of a line in $\cb P^2$ not passing the blown up points,
and $E_i$, $i\in\{1,\ldots,6\}$ is the corresponding exceptional divisor.
The following values are taken from
\cite{abl2011,iks2004}.
\begin{center}
\begin{tabular}[c]{|c|c|c|}
\hline
  $r_0$   & $W_{X_\rb,\rb P^2,0}(4[D]-\sum_{i=1}^{6}[E_i],r_0)$&$W_{X_\rb,\rb P^2,0}(2[D],r_0)$\\
\hline
  6         & 40 & 1 \\
\hline
  4        &16& 1 \\
\hline
  2       &0& 1 \\
\hline
\end{tabular}
\end{center}
By Theorem $\ref{thm:gdf-alg}$, we have
$$
W_{Y_\rb,\rb P^2,0}(d,(r_0,1))=\sum_{k\glt1}(-1)^{(k-1)}k^2W_{X_\rb,\rb P^2,0}(d-k[S],r_0).
$$
Thus we obtain
\begin{center}
\begin{tabular}[c]{|c|c|c|c|}
\hline
  $r_0$   & 5  &3&1\\
\hline
  $W_{Y_\rb,\rb P^2\sqcup S,0}(2c_1(Y),(r_0,1))$ &36 & 12& -4 \\
\hline
\end{tabular}
\end{center}
in accordance with the genus $1$ real Welschinger invariants of del Pezzo surface of degree $\glt2$
calculated by Shustin in \cite{shustin2015}.
\end{exa}

\begin{exa}
Let $Y_\rb$ be a real del Pezzo surface with degree $2$ whose real part
is homeomorphic to $\rb P^2\sharp\rb P^2\sqcup S$, where $S$ is a real sphere.
Let $X_\rb$ be the real surgery of $Y_\rb$ along $S$.
From the classification of real del Pezzo surfaces,
we may consider $X_\rb$ as $\cb P^2_{1,3}$, where $\cb P^2_{1,3}$ is the blow-up of $\cb P^2$
at $1$ real point and $3$ pairs of complex conjugated points.
Denote by $E_1,\ldots,E_6$ the exceptional divisors
corresponding to the conjugated pairs, by $E_7$ the exceptional divisor
corresponding to the real point. Let $D$ still be the pull back of a line in
$\cb P^2$ not passing the blown up points.
From Welschinger's wall-crossing formula
$$
W_{X_\rb,\rb P^2,0}(d,r_0+2) = W_{X_\rb,\rb P^2,0}(d,r_0)+
2W_{X_{1,0},\rb P^2\sharp\rb P^2,0}(p^!d-2[E],r_0),
$$
we can get the following values.
\begin{center}
\begin{tabular}[c]{|c|c|c|}
\hline
  $r_0$   & $W_{X_\rb,\rb P^2\sharp\rb P^2,0}(4[D]-\sum_{i=1}^{6}[E_i]-2[E_7],r_0)$&
  $W_{X_\rb,\rb P^2\sharp\rb P^2,0}(2[D]-2[E_7],r_0)$\\
\hline
  3         & 12 & 0 \\
\hline
  1        &8& 0 \\
\hline
\end{tabular}
\end{center}
From Theorem $\ref{thm:gdf-alg}$, we have
\begin{center}
\begin{tabular}[c]{|c|c|c|}
\hline
  $r_0$   & 3  &1\\
\hline
  $W_{Y_\rb,\rb P^2\sharp\rb P^2\sqcup S,0}(2c_1(Y),(r_0,1))$ &12 & 8 \\
\hline
\end{tabular}
\end{center}
in accordance with the genus $1$ real Welschinger invariants of del Pezzo surface of degree $\glt2$
calculated by Shustin in \cite{shustin2015}.
\end{exa}

%

\section{Appendix: genus decreasing formula in the symplectic case}
In this Appendix, we consider the genus decreasing formula in the symplectic case by
using the definition of higher genus Welschinger invariants proposed by E. Brugall\'e \cite[Theorem $3.4$]{brugalle2016}.
We refer the readers to \cite{brugalle2016} for more details about
the real surgeries along real Lagrangian spheres and the definition of higher genus Welschinger invariants.

\begin{thm}\label{thm:gdf-symp}
Let $X_\rb$ be a compact real symplectic $4$-manifold, and $S$ be a real
Lagrangian sphere in $X_\rb$, realizing a class in $H^{-\tau_X}_2(X;\zb)$.
Denote by $Y_\rb$ the real surgery of $X_\rb$ along $S$.
Let $\undl L=(L_0,\ldots,L_g)$, $\undl L'=(L_0,\ldots,L_{g-1})$,
where $L_0,\ldots,L_{g-1}$ are $g$ distinct connected components of $\rb X_\rb$ which is disjoint from $S$,
$L_g=S^2\subset\rb Y_\rb$, and $L=\cup_{i=0}^{g-1}L_i$.
Assume $F\in H^{\tau_Y}_2(X\setminus(L\cup S);\zb/2\zb)$ is normal to $[S]$.
Suppose the class $d\in H^{-\tau_Y}_2(X;\zb)$ satisfies $c_1(X)\cdot d\neq0$, if $g=3$,
and $c_1(X)\cdot d+g-2>0$. Let $\undl r=(r_0,\ldots,r_{g-1},1)$,
$\undl r'=(r_0,\ldots,r_{g-1})$. Then
$$
W_{Y_\rb,\undl L,F}(d,\undl r)=\sum_{k\glt1}(-1)^{(k-1)}k^2W_{X_\rb,\undl L',F}(d-k[S],\undl r'),
$$
whenever the invariant $W_{Y_\rb,\undl L,F}(d,\undl r)$ is defined.
\end{thm}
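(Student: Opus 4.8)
The plan is to run the same degeneration argument used for Theorem \ref{thm:gdf-alg}, replacing each algebraic ingredient by its symplectic counterpart in the framework of \cite{brugalle2016}. First I would realize the real surgery $Y_\rb \xrightarrow{S} X_\rb$ as a real symplectic degeneration whose central fibre is the symplectic sum of two pieces along a real symplectic sphere $E$ with $\rb E=\emptyset$: one piece carrying the rational ruled geometry of $\cb P^1\times\cb P^1$ with $[E]=l_1+l_2$ and real part $S^2$, the other being the log-del-Pezzo-type piece $Z$. Brugall\'e's definition of higher genus Welschinger invariants of real symplectic $4$-manifolds \cite[Theorem $3.4$]{brugalle2016}, together with the relative invariants $W_{Z,\undl L',F}^{E}$, makes both sides of the formula well defined in this setting; as in the algebraic case the condition $|S\cap\undl x|=1$ forces $\undl x(0)\cap(\cb P^1\times\cb P^1)=\{p\}$ and $\undl x(0)\cap Z\neq\emptyset$.

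Next I would establish the symplectic analogue of Proposition \ref{prop:dgfcplxmoduli-alg}: for a generic real configuration $\undl x(0)$, every limit stable map splits into a part $C_1$ mapped into $Z$ and a part $C_0$ mapped into $\cb P^1\times\cb P^1$, with no component entirely absorbed into $E$. Here the inputs \cite[Proposition $2.1$]{ss2013} and Proposition \ref{prop:hgp1p1-alg} are replaced by the symplectic compactness and gluing results of Ionel--Parker \cite{ip2004} and Li(-Ruan) \cite{li2002,li2004}; in particular Example $11.4$ and Lemma $14.6$ of \cite{ip2004} again exclude components mapped into $E$, and the number of smoothings of each limit map is supplied by \cite{li2002,li2004}. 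Since $\rb E=\emptyset$, exactly as in the proof of Theorem \ref{thm:gdf-alg} the intersection of $E$ with the image of a real curve consists only of conjugate pairs, which rules out the ``case $(1)$'' and ``case $(3)$'' configurations and forces $C_1$ to be a real relative curve of genus $g-1$ realizing $d-k[E]$ with $\bar f(\bar C')$ realizing $l_1+l_2$. Counting the $k\cdot 2^{k-1}$ real completions of each such $C_1$ and using the mass identity $m_{L\cup S,F}(\bar f)=m_{L,F}(\bar f_{|C_1})+k-1$, I obtain the relative relation
\begin{equation*}
W_{Y_\rb,\undl L,F}(d,\undl r)=\sum_{k\glt1}\left((-1)^{k-1}k2^{k-1}\right)W_{Z,\undl L',F}^{E}(d-k[E],\undl r'),
\end{equation*}
which is the exact symplectic analogue of equation (\ref{eq:pfthm1-alg}).

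From here the argument is purely combinatorial and word-for-word identical to the algebraic case. I would substitute the symplectic correspondence (the analogue of Corollary \ref{cor:corr-alg}, which in the symplectic category is \cite[Corollary $3.8$]{brugalle2016}) to re-express $W_{Z,\undl L',F}^{E}(d-k[E],\undl r')$ through the absolute invariants $W_{X_\rb,\undl L',F}(d-(k+2l)[S],\undl r')$, and then compute the total coefficient of $W_{X_\rb,\undl L',F}(d-i[S],\undl r')$. With $u_i=\sum_{k+2l=i}(-1)^{l}k2^{k-1}\binom{k+l}{k}$ and $v_i=\sum_{k+2l=i}(-1)^{l}2^{k}\binom{k+l}{k}$, Pascal's rule gives $u_{i+2}-u_{i+1}=u_{i+1}-u_i+v_{i+1}$, and the identity $v_{i+1}=i+2$ from the proof of \cite[Theorem $1.1$]{brugalle2016} yields $u_i-u_{i-2}=i^2$ for $i\glt3$, together with $u_1=1$, $u_2=4$; hence the coefficient is $(-1)^{i-1}i^2$, which is the asserted formula.

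The main obstacle I expect is the real symplectic gluing analysis underlying the relative relation, namely making the symplectic analogue of Proposition \ref{prop:dgfcplxmoduli-alg} rigorous within Brugall\'e's definition: one must verify that the count $k\cdot 2^{k-1}$ of real completions and the number of elements of $\cl^\cb(\widetilde\zl_t,d,g,\undl x(t))$ converging to each limit map agree with the relative count, uniformly in the real structure. The extra hypothesis $c_1(X)\cdot d\neq0$ when $g=3$ enters precisely at this stage, guaranteeing that the invariants on both sides are defined in the sense of \cite[Theorem $3.4$]{brugalle2016} and that no degenerate stratum spoils the clean matching; once this structural statement is established, deformation invariance of both sides and the combinatorial identity above close the argument.
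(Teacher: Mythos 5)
Your proposal is correct and follows essentially the same route as the paper: the real symplectic sum degeneration with $\rb E=\emptyset$, the symplectic analogue of Proposition \ref{prop:dgfcplxmoduli-alg} (the paper's Proposition \ref{prop:dgfcplxmoduli-symp}), the conjugate-pair argument forcing $C_1$ irreducible of genus $g-1$, the relative relation with coefficients $(-1)^{k-1}k2^{k-1}$, re-expression via \cite[Corollary 3.8]{brugalle2016}, and the identical $u_i,v_i$ combinatorics. The gluing/genericity analysis you flag as the main obstacle is exactly what the paper's Appendix supplies as Lemma \ref{lem:mspace-symp} (replacing \cite[Proposition 3.3]{bp2014}), whose Step 1 ruling out multiple covers is where the hypothesis $c_1(X)\cdot d\neq 0$ for $g=3$ actually enters.
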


\begin{rem}
The proof of Theorem $\ref{thm:gdf-symp}$ is similar to the proof of Theorem $\ref{thm:gdf-alg}$.
For the convenience of the readers, we indicate some principal points of the proof of Theorem $\ref{thm:gdf-symp}$.
\end{rem}

\subsection{Curves with tangency conditions}

In this subsection, we review some basics of the curves with tangency conditions.
The presentation here mainly follows \cite[Section 3.1]{bp2014} and \cite[Section 3.1]{brugalle2016}.

An isomorphism of two $J$-holomorphic maps
$f_1:\Sigma_1\to X$ and $f_2:\Sigma_2\to X$ is
a biholomorphism $\phi:\Sigma_1\to \Sigma_2$ such that $f_1=f_2\circ\phi$.
In the following, maps are always considered up to isomorphism.

Let $(X,\omega)$ be a compact symplectic $4$-manifold.
Assume $V=V_1\cup...\cup V_\kappa$ is a finite union of pairwise disjoint embedded
symplectic spheres with the same self-intersection number $[V_i]^2=c$, $i=1,\ldots,\kappa$.
Let $d\in H_2(X;\zb)$, $g\in\zb_{\geq0}$,
$(\alpha,\beta)=(\alpha^1,\beta^1;\alpha^2,\beta^2;...;\alpha^\kappa,\beta^\kappa)$
and $\alpha^i$, $\beta^i\in\zb_{\geq0}^\infty$ such that
\begin{equation}\label{eq:tan cond}
I\alpha^i+I\beta^i=d\cdot[V_i],~i=1,\ldots,\kappa.
\end{equation}
Choose a configuration $\undl x=\undl x^\circ\sqcup\cup_{i=1}^\kappa\undl x_{V_i}$ of points in $X$,
with $\undl x^\circ$ a configuration of
$c_1(X)\cdot d-1+g-\sum_{i=1}^\kappa d\cdot[V_i]+\sum_{i=1}^\kappa|\beta^i|$
points in $X\setminus V$,
and $\undl x_{V_i}=\{p^i_{s,t}\}_{0<t\leq\alpha^i_s,s\geq1}$ a configuration
of $|\alpha^i|$ points in $V_i$.
Given an $\omega$-tamed almost complex structure $J$ on $X$ such that
$V_i$ is $J$-holomorphic,
denote by $\cl^{(\alpha,\beta)}(X,V,d,g,\undl x,J)$ the set
of irreducible $J$-holomorphic curves $f:\Sigma_g\to X$ of genus $g$ such that

$\bullet$ $f_*[\Sigma_g]=d$;

$\bullet$ $\undl x\subset f(\Sigma_g)$;

$\bullet$ $V$ does not contain $f(\Sigma_g)$;

$\bullet$ $f(\Sigma_g)$ has order of contact $s$ with $V_i$ at each point $p^i_{s,t}$;

$\bullet$ $f(\Sigma_g)$ has order of contact $s$ with $V_i$ at exactly $\beta^i_s$
distinct points on $V_i\setminus\undl x_{V_i}$.

The set of simple maps in $\cl^{(\alpha,\beta)}(X,V,d,g,\undl x,J)$ is $0$-dimensional if the almost
complex structure $J$ is chosen to be generic.
However, $\cl^{(\alpha,\beta)}(X,V,d,g,\undl x,J)$
might contain components of positive dimension corresponding to non-simple maps.

\begin{lem}\label{lem:mspace-symp}
Suppose that $c_1(X)\cdot d\neq 0$, $2$, if $g=1$,
and $c_1(X)\cdot d\neq0$, if $g=3$.
Let $V=\cup_{i=1}^\kappa V_i$ be a finite union of pairwise disjoint embedded
symplectic spheres with the same self-intersection number $[V_i]^2=c\glt-2$, $i\in\{1,\ldots,\kappa\}$.
Assume that $|\beta^1|\glt d\cdot[V_1]-1$ and $|\beta^j|= d\cdot[V_j]$ for $j\in\{2,\ldots,\kappa\}$.
Then for a generic choice of $J$ among $\omega$-tamed almost complex structures
such that all the symplectic spheres $V_1,\ldots,V_\kappa$ are $J$-holomorphic,
the set $\cl^{(\alpha,\beta)}(X,V,d,g,\undl x,J)$ with $|\undl x^\circ|\neq0$
is finite and consisted of simple maps that are all immersions.
\end{lem}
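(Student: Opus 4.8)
The plan is to follow the strategy behind \cite[Lemma 3.1]{brugalle2016} (see also \cite[Section 3]{bp2014}), adapting it to arbitrary genus and to a union of several disjoint spheres, by splitting the assertion into two independent parts: \textbf{(a)} for generic $J$ the locus of \emph{simple} maps meeting all the constraints is a finite set of immersions, and \textbf{(b)} no \emph{multiply covered} map meets the constraints. First I would set up the bookkeeping. The space of simple genus $g$ maps in class $d$ has real dimension $2(c_1(X)\cdot d+g-1)$; by relative Gromov--Witten theory (\cite{ip2004,li2002,li2004}) each order of contact recorded by $(\alpha,\beta)$ cuts this dimension in the usual way, and the number of points in $\undl x^\circ$ is fixed precisely so that, after imposing the incidence and tangency conditions, the expected dimension of the simple stratum is $0$. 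The hypothesis $|\undl x^\circ|\neq 0$, that at least one constraint point lies off $V$, will be decisive in part (b).

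For part (a) I would invoke the relative transversality theory for $\omega$-tamed almost complex structures subject to the constraint that each $V_i$ be $J$-holomorphic; the bound $[V_i]^2=c\geq -2$ guarantees that this class of $J$ is rich enough to achieve transversality for maps not contained in $V$. Among such $J$ the universal moduli space of simple maps with the prescribed contact orders is cut out transversally, so for generic $J$ it is a smooth manifold of the expected dimension $0$. That the surviving curves are immersed is special to dimension four: the simple maps carrying a critical point form a stratum of strictly positive codimension, hence are avoided once the configuration is generic and the stratum has dimension $0$, exactly as in the algebraic argument of \cite[Proposition 2.1]{ss2013}. Finiteness then follows from Gromov compactness, since a sequence in a transverse $0$-dimensional stratum cannot converge to a degenerate limit without forcing the generic points into a positive-codimension locus.

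The main obstacle is part (b). Suppose $f\colon\Sigma_g\to X$ is a non-simple $J$-holomorphic map satisfying the constraints. Then $f=h\circ\phi$, where $\phi\colon\Sigma_g\to\Sigma'$ is a branched cover of degree $m\geq 2$ and $h\colon\Sigma'\to X$ is simple in a class $d'$ with $md'=d$; write $g'$ for the genus of $\Sigma'$. Riemann--Hurwitz gives $m(2g'-2)\leq 2g-2$, and since $f$ and $h$ share the same image, the simple curve $h(\Sigma')$ must already pass through every point of $\undl x^\circ$ and carry the contact imposed by $(\alpha,\beta)$. The hypotheses $|\beta^j|=d\cdot[V_j]$ for $j\geq 2$ and $|\beta^1|\geq d\cdot[V_1]-1$ force the contact of $h$ with each $V_j$ to be essentially transverse, which both controls the ramification of $\phi$ over $V$ and keeps $h$ to its transverse intersection count. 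Comparing the number of off-$V$ points that a simple class-$d'$, genus-$g'$ curve can carry with $|\undl x^\circ|$ then yields the key inequality
\begin{equation*}
(m-1)\,\bigl(c_1(X)\cdot d'\bigr)\leq g'-g+1,
\end{equation*}
while $|\undl x^\circ|\neq 0$ forces in addition $c_1(X)\cdot d'+g'\geq 2$. The delicate point is that these inequalities, together with $m\geq 2$, $c_1(X)\cdot d'\geq 0$, and the Riemann--Hurwitz bound, admit a solution precisely in the excluded configurations: a genus-preserving (essentially unramified) double cover of a genus-one curve with $c_1(X)\cdot d'=1$ when $g=1$, giving $c_1(X)\cdot d=2$, and the $c_1(X)\cdot d'=0$ covers producing $c_1(X)\cdot d=0$ for $g\in\{1,3\}$. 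Carrying out this finite case analysis, and checking that the stated numerical hypotheses rule out every surviving cover while $|\undl x^\circ|\neq 0$ eliminates the rigid ones (this is exactly what disposes of the unflagged genera, such as $g=2$), is where the real work lies.

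Assembling the pieces, part (a) produces a finite set of immersed simple curves of the correct dimension, part (b) shows that nothing else satisfies the constraints, and Gromov compactness excludes escape to the boundary; hence $\cl^{(\alpha,\beta)}(X,V,d,g,\undl x,J)$ is finite and consists of simple immersions, as claimed.
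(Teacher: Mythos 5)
Your part (b) reproduces the paper's Step 1 almost exactly: the same Riemann--Hurwitz plus point-count argument, with your inequality $(m-1)\,c_1(X)\cdot d'\llt g'-g+1$ appearing in the paper as $(\delta-1)\,c_1(X)\cdot d_0\llt g'-g+1$, the same use of $|\undl x^\circ|\neq 0$ to force $c_1(X)\cdot d_0-1+g'=1$, and the same terminal analysis over $g'\in\{0,1,2\}$ identifying the excluded pairs $(g,c_1(X)\cdot d)=(1,2)$ and $(3,0)$. That half is sound. The genuine gap is in your part (a), where you dismiss finiteness with the claim that ``a sequence in a transverse $0$-dimensional stratum cannot converge to a degenerate limit without forcing the generic points into a positive-codimension locus.'' This is false in exactly the situation the lemma is built for: $J$ ranges only over almost complex structures making every $V_i$ holomorphic, so components of a Gromov limit mapped into $V$ can never be perturbed away, and when $c=-2$ such components are \emph{dimension-neutral}, since $c_1(X)\cdot[V_j]=0$. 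Concretely, in the paper's inequality $(\ref{eq:lem:modspace1})$ the contribution $2\sum_j l_j+\sum_j l_j[V_j]^2$ of the components absorbed by $V$ vanishes identically for $c=-2$, so degenerate limits with $\sum_j l_j>0$ satisfy the same expected-dimension count as the honest curves and are not of positive codimension; no transversality argument sees them.

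This is why the paper's Step 2 exists and is the bulk of the proof: assuming infinitely many simple maps, it takes a Gromov limit $\bar f:\bar C\to X$, splits $\bar C$ into components mapped into $V$ and the rest, derives $(\ref{eq:lem:modspace1})$, and then enumerates the surviving cases $(2)(a)$--$(g)$. The dangerous cases $(2)(d)$--$(g)$ (with $\sum_j l_j>0$) are eliminated not by dimension counting but by positivity of intersection of $J$-holomorphic curves: one tracks which component of $\bar C$ absorbs the limit of the point responsible for the extra contact constraint $\alpha_1$ or $\beta_2$, and shows that the nearby curves $f_n$ would then have to meet $V$ in strictly more than $d\cdot[V]$ points (counted with multiplicity), a contradiction. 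Note also that even your ``reducible limit'' worry is not resolved by genericity alone: the case $m=2$, $\sum_j l_j=0$ (the paper's $(2)(c)$) is only excluded because the fixed reducible image would meet $V$ transversely at non-prescribed points, contradicting $\alpha\neq 0$ or $\beta_2\neq 0$ --- an argument using the tangency bookkeeping, not the dimension of a stratum. Without this case analysis your finiteness assertion is unproved, and it is precisely the part of the lemma that goes beyond \cite[Lemma 3.1]{brugalle2016}.
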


\begin{proof}
The proof can be divided into two parts: first we prove that the set
$\cl^{(\alpha,\beta)}(X,V,d,g,\undl x,J)$
contains no element which factors through a non-trivial ramified covering;
then we prove the finiteness of $\cl^{(\alpha,\beta)}(X,V,d,g,\undl x,J)$.

{\bf Step 1}:
Note that if $|\beta^i|= d\cdot[V_i]$ for $i\in\{1,\ldots,\kappa\}$,
the first part of the proof is exactly the same as step $1$ of the proof of \cite[Lemma $3.1$]{brugalle2016}.
We omit it here.

Now we consider the case $|\beta^1|= d\cdot[V_1]-1$ and
$|\beta^j|= d\cdot[V_j]$ for $j\in\{2,\ldots,\kappa\}$.
Suppose that $\cl^{(\alpha,\beta)}(X,V,d,g,\undl x,J)$
contains a non-simple map which factors through a non-trivial ramified covering
of degree $\delta\geq2$ of a simple map $f_0:\Sigma'_{g'}\to X$ of genus $g'$.
Let $d_0$ denote the homology class $(f_0)_*[\Sigma'_{g'}]$.
By the Riemann-Hurwitz formula, we can get
\begin{equation}\label{eq:rhf-1}
g\geq\delta g'+1-\delta.
\end{equation}
Since $f_0(\Sigma'_{g'})$ passes through $c_1(X)\cdot d -2+g$ points,
we have
$$
c_1(X)\cdot d_0-1+g' \geq  c_1(X)\cdot d-2+g.
$$
Combined with equation $(\ref{eq:rhf-1})$, we have
$$
c_1(X)\cdot d_0-1+g' \leq  \frac{1}{\delta-1}.
$$
Since $|\undl x^0|>0$, we know $0<c_1(X)\cdot d_0-1+g'\leq 1$.
Therefore,
$$
c_1(X)\cdot d_0-1+g'=1.
$$
Furthermore, all inequalities above are in fact equalities.
We can get $g'\leq2$ since $c_1(X)\cdot d_0\geq0$.
If $g'=2$, we have $c_1(X)\cdot d_0=c_1(X)\cdot d=0$ and $g=3$ which contradicts the assumption.
If $g'=1$, we have $c_1(X)\cdot d=2$ and $g=1$ which is excluded by the assumption.
If $g'=0$, we obtain $g=-1$ which is a contradiction.

{\bf Step 2}:
Suppose that $\cl^{(\alpha,\beta)}(X,V,d,g,\undl x,J)$
contains infinitely many simple maps. By Gromov compactness theorem,
there exists a sequence $(f_n)_{n\glt0}$ of distinct simple maps in
$\cl^{(\alpha,\beta)}(X,V,d,g,\undl x,J)$ which converges
to some $J$-holomorphic map $\bar f:\bar C\to X$.
By genericity of $J$, the set of simple maps in
$\cl^{(\alpha,\beta)}(X,V,d,g,\undl x,J)$ is discrete.
Hence either $\bar C$ is reducible, or $\bar f$ is non-simple.
Let $\bar C_1$,...,$\bar C_m$, $\bar C_1^1$,...,$\bar C^{m_1}_1$,...
$\bar C^1_\kappa$,...,$\bar C^{m_\kappa}_\kappa$
be the irreducible components of $\bar C$, labeled in such a way that
\begin{itemize}
  \item $\bar f(\bar C_i)\nsubseteq V$ and the genus of $\bar C_i$ is $g_i$, $i=1,\ldots,m$;
  \item $\bar f(\bar C^i_j)\subset V_j$, $(\bar f)_*[\bar C^i_j]=l^i_j[V_j]$,
  $i=1,...,m_j$, $j=1,...,\kappa$, and the genus of $\bar C^i_j$ is $g^i_j$.
\end{itemize}
Define $l_j=\sum^{m_j}_{i=1}l^i_j$, $\bar g_j=\sum^{m_j}_{i=1}g^i_j$, $j=1,...,\kappa$.
Let $\bar f_*[\bar C_i]=d_i$, $i=1,...,m$.
Then $\sum_{i=1}^md_i=d-\sum_{j=1}^\kappa l_j[V_j]$.
The restriction of $\bar f$ to $\displaystyle\cup_{i=1}^m\bar C_i$ is subjected to
$c_1(X)\cdot d-1+g-\sum_{i=1}^\kappa d\cdot[V_i]+\sum_{i=1}^\kappa|\beta^i|$ points constrains,
so we have
$$
c_1(X)\cdot(d-\sum_{j=1}^\kappa l_j[V_j])-m+\sum_{i=1}^mg_i\geq
c_1(X)\cdot d-1+g-\sum_{i=1}^\kappa d\cdot[V_i]+\sum_{i=1}^\kappa|\beta^i|.
$$
Since $V_1,\ldots,V_\kappa$, are embedded symplectic spheres with the
same self-intersection $[V_j]^2=c$, from adjunction formula,
we can get $c_1(X)\cdot[V_j]=[V_j]^2+2$, $j=1,...,\kappa$.
Hence we obtain
\begin{equation}\label{eq:lem:modspace1}
2\sum_{j=1}^\kappa l_j+\sum_{j=1}^\kappa l_j[V_j]^2+m-1-\sum_{i=1}^\kappa d\cdot[V_i]+
\sum_{i=1}^\kappa|\beta^i|+g-\sum_{i=1}^mg_i\llt0.
\end{equation}
From the assumption $|\undl x^\circ|>0$, one can get $m\glt1$.
Since $\sum_{i=1}^mg_i\llt g$, we have the following cases:
\begin{enumerate}
  \item $|\beta^i|= d\cdot[V_i]$ for $i\in\{1,\ldots,\kappa\}$:
  \begin{description}
    \item[(a)] $\sum_{j=1}^\kappa l_j=0$, $m=1$, and $g_1=g$;
    \item[(b)] $[V_1]^2=-2$, $\sum_{j=1}^\kappa l_j>0$, $m=1$, and $g_1=g$.
  \end{description}
  \item $|\beta^1|= d\cdot[V_1]-1$ and $|\beta^j|= d\cdot[V_j]$ for $j\in\{2,\ldots,\kappa\}$:
  \begin{description}
    \item[(a)] $\sum_{j=1}^\kappa l_j=0$, $m=1$, and $g_1=g-1$;
    \item[(b)] $\sum_{j=1}^\kappa l_j=0$, $m=1$, and $g_1=g$;
    \item[(c)] $\sum_{j=1}^\kappa l_j=0$, $m=2$, and $g_1+g_2=g$;
    \item[(d)] $[V_1]^2=-1$, $\sum_{j=1}^\kappa l_j=1$, $m=1$, and $g_1=g$;
    \item[(e)] $[V_1]^2=-2$, $\sum_{j=1}^\kappa l_j>0$, $m=1$, and $g_1=g-1$;
    \item[(f)] $[V_1]^2=-2$, $\sum_{j=1}^\kappa l_j>0$, $m=1$, and $g_1=g$;
    \item[(g)] $[V_1]^2=-2$, $\sum_{j=1}^\kappa l_j>0$, $m=2$, and $g_1+g_2=g$.
  \end{description}
\end{enumerate}

Note that case $(1)$ is impossible which has been proved by \cite[Lemma 3.1]{brugalle2016}.
Now we end the proof of the lemma by showing case $(2)$ can be ruled out too.

\begin{description}
  \item[(2)(a)(b)] $\sum_{j=1}^\kappa l_j=0$, $m=1$, and $g_1=g-1$ or $g_1=g$:

  The curve $\bar C$ is irreducible. $\bar f$ is a non-simple map, and it factorizes
  through a non-trivial ramified covering of a simple map $f_0:C_0\to X$,
  which contradicts Step 1.
  \item[(2)(c)] $\sum_{j=1}^\kappa l_j=0$, $m=2$, and $g_1+g_2=g$:

  By genericity, the curve $\bar f(\bar C_1\cup\bar C_2)$ is fixed by the
  $c_1(X)\cdot d-2+g$ point constraints, and intersects $V$ transversely
  at non-prescribed points. This contradicts the fact that either $\alpha\neq0$ or $\beta_2\neq0$.
  \item[(2)(d)] $[V_1]^2=-1$, $\sum_{j=1}^\kappa l_j=1$, $m=1$, and $g_1=g$:

  Since $l_j\glt0$ and $\sum_{j=1}^\kappa l_j=1$, without loss of generality,
  we may assume $l_1=1$, $l_j=0$, $j=2,\ldots,\kappa$.
  By genericity of $J$, the curve $\bar f(\bar C_1)$ is fixed by the $c_1(X)\cdot d-2+g$
  point constraints in $X$, and intersects $V$ transversely in $d\cdot[V]+1$
  non-prescribed points. Any intersection point of $\bar f(\bar C_1\setminus\bar C_1^1)$
  deforms to an intersection point of the image of $f_n$ and $V$ for $n\gg1$.
  Since all intersection points of $\bar f(\bar C_1)$ and $V$ are transverse and non-prescribed,
  the component $\bar C_1^1$ contains the limit of the point corresponding
  to the extra constraint $\alpha_1$ or $\beta_2$. $f_n$ and $V$ for $n\gg1$
  must have at least $d\cdot[V]+1$ intersection points, which is a contradiction.
  \item[(2)(e)] $[V_1]^2=-2$, $\sum_{j=1}^\kappa l_j>0$, $m=1$, and $g_1=g-1$:

  Since the curve $\bar f(\bar C_1)$ is fixed by $c_1(X)\cdot d-2+g$ point constraints,
  $\bar f_{|\bar C_1}$ is simple from Step $1$ and intersects $V$ transversely.
  Any intersection point of
  $\bar f(\bar C_1\setminus(\bar C^{1}_1\cup\ldots\cup\bar C^{m_\kappa}_\kappa))$
  and $V$ deforms to an intersection point of the image of $f_n$ and $V$ for $n\gg1$.
  It follows from $(d-\sum_{j=1}^\kappa l_j[V_j])\cdot[V]=d\cdot[V]+2\sum_{j=1}^\kappa l_j$
  that at least $d\cdot[V]+\sum_{j=1}^\kappa l_j-1$ intersection points of $\bar f(\bar C_1)$
  and $V$ deform to an intersection point of the image of $f_n$ and $V$ for $n\gg1$.
  Since all intersection points of $\bar f(\bar C_1)$ and $V$ are transverse and non-prescribed,
  the components $\bar C_1^1,\ldots,\bar C^{m_\kappa}_\kappa$ contain the limit of the point corresponding
  to the extra constraint $\alpha_1$ or $\beta_2$. $f_n$ and $V$ for $n\gg1$
  must have at least $d\cdot[V]+\sum_{j=1}^\kappa l_j$ intersection points.
  But this contradicts the fact that two $J$-holomorphic curves intersect positively.
  \item[(2)(f)] $[V_1]^2=-2$, $\sum_{j=1}^\kappa l_j>0$, $m=1$, and $g_1=g$:

  Since $\bar f_{|\bar C_1}$ satisfies $c_1(X)\cdot d-2+g$ point constraints,
  $\bar f_{|\bar C_1}$ is simple from Step $1$ and it has at most one tangency point with $V$.
  The same argument used in the case $(2)(e)$ implies that $\sum_{j=1}^\kappa l_j=1$
  (Assume $l_1=1$, $l_j=0$, $j=2,\ldots,\kappa$)
  and $\bar f(\bar C_1)$ is tangent to $V$ at $\bar f(\bar C_1\cap\bar C^1_1)$.
  Hence $\bar f_{|C_1}$ is fixed by this tangency condition and the $c_1(X)\cdot d-2+g$
  other point conditions, and the component $\bar C_1^1$ contains the limit of the
  point corresponding to the extra constraint $\alpha_1$ or $\beta_2$.
  This contradicts to the positivity of intersection points of $V$ and the image of $f_n$ for $n\gg1$.
  \item[(2)(g)] $[V_1]^2=-2$, $\sum_{j=1}^\kappa l_j>0$, $m=2$, and $g_1+g_2=g$:

  By the genericity of $J$, $\bar f(\bar C_1\cup\bar C_2)$ is fixed by the $c_1(X)\cdot d-2+g$
  point constraints, and intersects $V$ transversely at non-prescribed points.
  The same argument used in the case $(2)(e)$ implies that $\sum_{j=1}^\kappa l_j=1$
  (Assume $l_1=1$, $l_j=0$, $j=2,\ldots,\kappa$).
  $\bar C_1^1$ must contain the limit of the point corresponding to the extra
  constraint $\alpha_1$ or $\beta_2$, which is a contradiction.
\end{description}
\end{proof}

\subsection{Genus decreasing formula}

Suppose that $Y_\rb$ is the real surgery of $X_\rb$ along a real Lagrangian sphere $S$,
$Y_\rb \xrightarrow[]{S}  X_\rb$ (see \cite[Section $2.2$]{brugalle2016}).
From \cite[Section $2.2$]{brugalle2016}, we know $Y_\rb$ is
the real symplectic sum of $Z_\rb$ and
$(\cb P^1\times\cb P^1,\omega_0,\tau_{\emptyset,2})$ along $E$,
where $Z_\rb$ is a real symplectic $4$-manifold containing an embedded symplectic sphere $E$ with $[E]^2=-2$,
and $\tau_{\emptyset,2}$ is a real structure on $\cb P^1\times\cb P^1$ such that
$\rb E=\emptyset$ and $\chi(\rb(\cb P^1\times\cb P^1))=2$.
Let $L=\cup_{i=0}^{g-1}L_i$ be a union of $g$ distinct connected components of $\rb X_\rb$,
$L_g=S\subset\rb Y_\rb$ be a real Lagrangian sphere disjoint from $L$,
$\undl L=(L_0,\ldots,L_{g-1},L_g)$ and $\undl L'=(L_0,\ldots,L_{g-1})$.

Let $\pi:\zl\to\Delta$ be the real symplectic sum of $Z_\rb$ and $\cb P^1\times\cb P^1$ along $E$
(see \cite[Section $2.2$]{brugalle2016}), $d\in H_2(\zl_\lambda;\zb)$.
Choose $\undl x(\lambda)$ a set of $c_1(X)\cdot d+g-1$ real
symplectic sections $\Delta\to\zl$ such that $\undl x(0)\cap E=\emptyset$.
Choose an almost complex structure $J$ on $\zl$ tamed by $\omega_\zl$, which restricts to an
almost complex structure $J_\lambda$ tamed by $\omega_\lambda$
on each fiber $\zl_\lambda$, and generic with
respect to all choices we made.

Let $X_\sharp= Z_\rb\cup_E (\cb P^1\times\cb P^1)$,
and $\undl L_\sharp$ be the degeneration of $\undl L$ as $Y_\rb$ degenerates to $X_\sharp$.
Denote by $\cl^\cb(X_\sharp,\emptyset,d,g,\undl{x}(0),J_0)$ the set
$\{\bar f:\bar C\to X_\sharp\}$ of limits, as stable maps,
of maps in $\cl^\cb(\zl_\lambda,\emptyset,d,g,\undl{x}(\lambda),J_\lambda)$ as $\lambda$ goes to $0$.
Recall that if $\lambda\neq0$, $\cl^\cb(\zl_\lambda,\emptyset,d,g,\undl{x}(\lambda),J_\lambda)$
has been defined as the set of irreducible $J$-holomorphic curves $f:\Sigma_g\to\zl_\lambda$ of genus $g$,
with $f_*[\Sigma_g]=d$, and passing through all points in $\undl x(\lambda)$
(see \cite[Section $3.1$]{brugalle2016}).
From \cite[Section $3$]{ip2004}, we know $\bar C$ is a connected nodal curve of
arithmetic genus $g$ such that:

$\bullet$ $\undl x(0)\subset\bar f(\bar C)$;

$\bullet$ any point $p\in\bar f^{-1}(E)$ is a node of $\bar C$ which is the intersection of two
irreducible components $\bar C'$ and $\bar C''$ of $\bar C$, with $\bar f(\bar C')\subset Z$
and $\bar f(\bar C'')\subset\cb P^1\times\cb P^1$;

$\bullet$ if in addition neither $\bar f(\bar C')$ nor $\bar f(\bar C'')$ is entirely mapped into $E$,
then the multiplicities of intersection of both $\bar f(\bar C')$ and $\bar f(\bar C'')$ with $E$ are equal.

Given an element
$\bar f:\bar C\to X_\sharp$ of $\cl^\cb(X_\sharp,\emptyset,d,g,\undl x(0),J_0)$,
denote by $C_1$ (resp. $C_0$)
the union of the irreducible components of $\bar C$ mapped into $Z$ (resp. $\cb P^1\times\cb P^1$).

\begin{lem}\cite[Lemma $3.12$]{brugalle2016}\label{lem:abv}
Given an element $\bar f:\bar C\to X_\sharp$ of $\cl^\cb(X_\sharp,\emptyset,d,g,\undl x(0),J_0)$,
there exists $k\in\zb_{\glt0}$ such that
$$
\bar f_*[C_1]=d-k[E]~\textrm{ and }~\bar f_*[C_0]=kl_1+(d\cdot[E]+k)l_2.
$$
Moreover $c_1(Z)\cdot\bar f_*[C_1]=c_1(\zl_\lambda)\cdot d$.
\end{lem}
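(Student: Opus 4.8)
The plan is to treat Lemma \ref{lem:abv} as a piece of homological bookkeeping in the symplectic sum $X_\sharp=Z\cup_E W$, where I write $W=\cb P^1\times\cb P^1$, reading off the two classes $\bar f_*[C_1]$ and $\bar f_*[C_0]$ from positivity of intersection in the quadric, from the matching of contact orders along $E$ recorded in the third bullet preceding the lemma, and from the way $H_2$ of the smoothing is assembled from the two pieces. First I would record the $W$-side. Since $\bar f_{|C_0}$ is $J_0$-holomorphic, positivity of intersection with the two rulings gives
$$\bar f_*[C_0]=a\,l_1+b\,l_2,\qquad a,b\in\zb_{\glt0},$$
and intersecting with $[E]=l_1+l_2$ yields $\bar f_*[C_0]\cdot_W[E]=a+b$. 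On the other hand, the second and third bullets state that every point of $\bar f^{-1}(E)$ is a node joining a component of $C_1$ to a component of $C_0$ with equal contact orders, so summing these common multiplicities gives the matching identity $\bar f_*[C_1]\cdot_Z[E]=\bar f_*[C_0]\cdot_W[E]=a+b$.

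Next I would compute the gluing. Because $E\cong S^2$ has $H_1(E)=H_3(E)=0$, the Mayer--Vietoris sequence for $X_\sharp=Z\cup_E W$ gives
$$H_2(X_\sharp;\zb)\cong\bigl(H_2(Z;\zb)\oplus H_2(W;\zb)\bigr)\big/\bigl\langle([E],-(l_1+l_2))\bigr\rangle,$$
so the class $[E]$ of $Z$ is identified with $l_1+l_2$ across the neck. Since the total space of the family is smooth and $\pi$ is flat, the class $d$ of the general fiber specializes to $\bar f_*[\bar C]=\bar f_*[C_1]+\bar f_*[C_0]$ in $H_2(X_\sharp;\zb)$; its transverse model, in which the $d\cdot[E]$ intersection points of the $Z$-part with $E$ are capped by distinct rulings, is represented (after possibly interchanging $l_1$ and $l_2$) by $(d,(d\cdot[E])\,l_2)$. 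Comparing the two representatives modulo the gluing relation produces an integer $k$ with $(\bar f_*[C_1],\bar f_*[C_0])=(d,(d\cdot[E])l_2)+k\,(-[E],l_1+l_2)$, that is,
$$\bar f_*[C_1]=d-k[E],\qquad \bar f_*[C_0]=k\,l_1+(d\cdot[E]+k)\,l_2.$$
Matching with $\bar f_*[C_0]=a\,l_1+b\,l_2$ forces $a=k$, whence $k\glt0$ by positivity; the matching identity is then automatic, since $(d-k[E])\cdot[E]=d\cdot[E]+2k=a+b$. A component of $\bar C$ mapped entirely into $E$ introduces no ambiguity here, because the relation already identifies $[E]$ with $l_1+l_2$.

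Finally I would settle the Chern-number statement. Adjunction for the embedded $(-2)$-sphere $E\subset Z$ gives $c_1(Z)\cdot[E]=[E]^2+2=0$, so $c_1(Z)\cdot\bar f_*[C_1]=c_1(Z)\cdot d$. The symplectic-sum first Chern class formula (pairing a decomposed class $(A_Z,A_W)$ against $c_1(\zl_\lambda)$ by $(c_1(Z)-[E])\cdot A_Z+(c_1(W)-[E])\cdot A_W$), applied to the representative $(d,(d\cdot[E])l_2)$, gives
$$c_1(\zl_\lambda)\cdot d=(c_1(Z)-[E])\cdot d+(c_1(W)-[E])\cdot(d\cdot[E])l_2=c_1(Z)\cdot d,$$
since $c_1(W)=2l_1+2l_2$ yields $(c_1(W)-[E])\cdot l_2=1$ and $[E]\cdot d=d\cdot[E]$. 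Combining the two displays gives $c_1(Z)\cdot\bar f_*[C_1]=c_1(\zl_\lambda)\cdot d$, as required.

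I expect the main obstacle to be the gluing step: justifying, in the symplectic category, that the specialization of $d$ is exactly the coset of $(d,(d\cdot[E])l_2)$ modulo $([E],-(l_1+l_2))$, i.e. that the only freedom is to absorb copies of $[E]=l_1+l_2$ across the neck and that no rim-torus contribution intervenes. This is where one must invoke the description of $H_2$ of the smoothing and of the limiting stable maps from \cite[Section 3]{ip2004}; the vanishing $H_1(E)=0$ is precisely what kills the rim tori and makes the single integer $k$ the entire source of the indeterminacy, while the three bullets fix the transverse representative rather than merely its coset.
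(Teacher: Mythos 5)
The paper does not actually prove this lemma: it is imported verbatim, as a citation, from \cite{brugalle2016} (Lemma $3.12$ there), so there is no internal proof to compare against. Your reconstruction is correct and runs along the same standard lines as the cited source: positivity of intersection with the rulings on the quadric side, the Mayer--Vietoris identification $H_2(X_\sharp;\zb)\cong\bigl(H_2(Z;\zb)\oplus H_2(W;\zb)\bigr)/\langle([E],-(l_1+l_2))\rangle$ (valid precisely because $H_1(E)=0$ kills rim tori), the coset invariant $b-a$ pinning down the single integer $k=a\glt0$, and adjunction for the embedded $(-2)$-sphere giving $c_1(Z)\cdot[E]=0$. Your Chern-number computation also checks out, and is moreover well defined on the coset, since $(c_1(Z)-[E])\cdot[E]=2=(c_1(W)-[E])\cdot(l_1+l_2)$, so the pairing is insensitive to the choice of representative $(A_Z,A_W)$. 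The only soft spot is the one you flag yourself: the assertion that the specialization of $d$ is the coset of $(d,(d\cdot[E])\,l_2)$ is not really something to be derived from a ``transverse model'' (whose existence you do not establish, and do not need); in the framework of \cite{brugalle2016} and \cite[Section 3]{ip2004}, writing $\bar f_*[C_1]=d-k[E]$ already presupposes a fixed identification of $H_2(\zl_\lambda;\zb)$-classes with matched pairs modulo the gluing relation, under which $d$ corresponds by definition to the coset of $(d,(d\cdot[E])\,l_2)$ up to interchanging $l_1$ and $l_2$. So that step should be cited as part of the setup rather than argued geometrically; with that replacement your proof is complete.
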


\begin{prop}\label{prop:dgfcplxmoduli-symp}
Assume that $\undl x(0)\cap(\cb P^1\times\cb P^1)=\{p\}$ and
$\undl x(0)\cap Z\neq\emptyset$. Then for a generic $J_0$,
the set $\cl^\cb(X_\sharp,\emptyset,d,g,\undl x(0),J_0)$ is finite,
and only depends on $\undl x(0)$ and $J_0$.
Given an element $\bar f:\bar C\to  X_\sharp$ of $\cl^\cb(X_\sharp,\emptyset,d,g,\undl x(0),J_0)$,
the restriction of $\bar f$ to any component of $\bar C$ is a simple map,
and no irreducible component of $\bar C$ is entirely mapped into $E$.
Moreover the image of the irreducible component $\bar C'$ of $C_0$
whose image contains $p$ realizes either a class $l_i$ or the class $l_1+l_2$,
while any other irreducible component of $C_0$ realizes a class $l_i$.
\begin{enumerate}
  \item If $\bar f_*[\bar C']=l_i$, then the curve $C_1$ is irreducible with genus $g$,
  and $\bar f_{|C_1}$ is an element of
  $\cl^{(\delta_1,(d\cdot[E]+2k-1)\delta_1)}(Z,E,d-k[E],g,(\undl x(0)\cap Z)\cup\undl x_E,J_0)$,
  where $\undl x_E=\bar f(\bar C')\cap E$. The map $\bar f$ is the limit of a
  unique element of $\cl^\cb(\zl_\lambda,\emptyset,d,g,\undl x(\lambda),J_\lambda)$
  as $\lambda$ goes to $0$.
  \item If $\bar f_*[\bar C']=l_1+l_2$, then $\bar f_{|\bar C'}$ is an element of
  $\cl^{(\alpha,0)}(\cb P^1\times\cb P^1,E,l_1+l_2,0,\{p\}\cup\undl x_E,J_0)$, where
  $\undl x_E\subset\bar f(C_1)\cap E$, and $\alpha=2\delta_1$ or $\alpha=\delta_2$.
  In the former case, the curve $C_1$ either has two irreducible components $C_1'$, $C_1''$
  with $g(C_1')+g(C_1'')=g$ or is irreducible with genus $g-1$,
  and $\bar f$ is the limit of a unique element of
  $\cl^\cb(\zl_\lambda,\emptyset,d,g,\undl x(\lambda),J_\lambda)$ as $\lambda$ goes to $0$;
  in the latter case, the curve $C_1$ is irreducible, and $\bar f$ is the limit of
  exactly two elements of $\cl^\cb(\zl_\lambda,\emptyset,d,g,\undl x(\lambda),J_\lambda)$
  as $\lambda$ goes to $0$.
\end{enumerate}
\end{prop}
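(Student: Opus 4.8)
The plan is to follow the proof of Proposition \ref{prop:dgfcplxmoduli-alg} verbatim in its combinatorial structure, replacing the algebraic genericity inputs by their pseudoholomorphic analogues. The single new ingredient that the symplectic category demands is Lemma \ref{lem:mspace-symp}, which here takes over the role played by \cite[Proposition 2.1]{ss2013} in the algebraic argument; in particular, the algebraic device \cite[Corollary 5.4]{bru2015} (our Lemma \ref{lem:deg-alg}) becomes unnecessary, since the simplicity of every component asserted in the statement is now supplied directly by Lemma \ref{lem:mspace-symp}. Concretely, first I would apply Lemma \ref{lem:mspace-symp} to the restriction $\bar f_{|C_1}:C_1\to Z$, taking $V=E$ with $[E]^2=-2$ (so $c=-2$ and $\kappa=1$): the contact profile forced along $E$ by the $\cb P^1\times\cb P^1$-side puts $|\beta^1|$ exactly one below $(d-k[E])\cdot[E]$, which is the boundary hypothesis $|\beta^1|\glt d\cdot[V_1]-1$ of the lemma, and Lemma \ref{lem:abv}'s identity $c_1(Z)\cdot(d-k[E])=c_1(X)\cdot d$ transfers the standing hypotheses on $c_1(X)\cdot d$ and $g$ into precisely the numerical conditions the lemma requires. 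For the $\cb P^1\times\cb P^1$-side I would invoke Proposition \ref{prop:hgp1p1-alg}, now read with the generic $J_0$, to enumerate the admissible components. Finiteness of $\cl^\cb(X_\sharp,\emptyset,d,g,\undl x(0),J_0)$ and its dependence only on $(\undl x(0),J_0)$ then follow from the same dimension count as in the algebraic proof.

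Next I would fix the combinatorial type of $\bar C$. Example $11.4$ and Lemma $14.6$ of \cite{ip2004} rule out any irreducible component entirely mapped into $E$, and Lemma \ref{lem:abv} pins down $\bar f_*[C_1]=d-k[E]$ and $\bar f_*[C_0]=kl_1+(d\cdot[E]+k)l_2$. Because $\undl x(0)\cap(\cb P^1\times\cb P^1)=\{p\}$ while $\undl x(0)\cap Z\neq\emptyset$, the map $\bar f_{|C_1}$ is constrained by $c_1(Z)\cdot(d-k[E])-2+g=c_1(X)\cdot d-2+g$ points of $Z$. Reproducing the dimension bookkeeping of Proposition \ref{prop:dgfcplxmoduli-alg}, together with the simplicity and transversality guaranteed by Lemma \ref{lem:mspace-symp}, collapses the possible configurations to the cases listed in the statement: either $C_1$ is irreducible of genus $g$ with $C_0$ a chain of $l_i$-lines, or $C_0$ contains a single distinguished component $\bar C'$ through $p$ realizing $l_1+l_2$ while $C_1$ either splits as $C_1'\cup C_1''$ with $g(C_1')+g(C_1'')=g$ or is irreducible of genus $g-1$. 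Proposition \ref{prop:hgp1p1-alg} then identifies the class of $\bar C'$ as $l_i$ or $l_1+l_2$, giving alternatives $(1)$ and $(2)$, and in case $(2)$ separates the contact profiles $\alpha=2\delta_1$ and $\alpha=\delta_2$.

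Finally, the multiplicity with which each limit $\bar f$ is attained by curves in $\cl^\cb(\zl_\lambda,\emptyset,d,g,\undl x(\lambda),J_\lambda)$ is read off from the symplectic sum / relative gluing formalism of \cite{li2002,li2004} (compare \cite{ip2004}): a node at which $C_1$ and $C_0$ meet $E$ transversally glues back to a unique smooth curve, whereas an order-$2$ tangency, that is the profile $\alpha=\delta_2$, contributes a gluing factor of $2$, which is the source of the ``exactly two elements'' in case $(2)$. I expect the main obstacle to be not any single step but the bookkeeping of contact orders across $E$: one must verify that the matching condition in the third bullet preceding Lemma \ref{lem:abv} is respected, that $J_0$ can be chosen generic simultaneously for the $Z$-restriction (through Lemma \ref{lem:mspace-symp}) and the $\cb P^1\times\cb P^1$-restriction (through Proposition \ref{prop:hgp1p1-alg}), and that the gluing is performed with the correct tangency data so that the factor of $2$ in the $\alpha=\delta_2$ case is faithfully reproduced.
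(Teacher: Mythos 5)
Your proposal is correct and follows essentially the same route as the paper: the paper itself proves this proposition only by a remark stating that one replaces \cite[Proposition 3.3]{bp2014} by Lemma \ref{lem:mspace-symp} and repeats the argument of \cite[Proposition 3.7]{bp2014}, referring to Proposition \ref{prop:dgfcplxmoduli-alg} for the dimension bookkeeping, the exclusion of components in $E$ via \cite{ip2004}, and the gluing multiplicities from \cite{li2002,li2004} --- exactly the three steps you carry out. Your only loose point is invoking Proposition \ref{prop:hgp1p1-alg} (an algebraic statement from \cite{v2000}) ``read with the generic $J_0$''; strictly one should cite its symplectic counterpart \cite[Proposition 3.4]{bp2014}, but this is a citation-level adjustment, not a gap.
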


\begin{rem}
Note that Proposition $\ref{prop:dgfcplxmoduli-symp}$
is a higher genus version of \cite[Proposition $3.7$]{bp2014}.
If we replace \cite[Proposition $3.3$]{bp2014} by Lemma $\ref{lem:mspace-symp}$,
we can prove Proposition $\ref{prop:dgfcplxmoduli-symp}$ similarly to \cite[Proposition $3.7$]{bp2014}.
Therefore, we omit it here.
The readers may refer to \cite[Proposition $3.7$]{bp2014}
and Proposition $\ref{prop:dgfcplxmoduli-alg}$ for more details.
\end{rem}

If the set $\undl x(\lambda)$ of $c_1(X)\cdot d+g-1$ real symplectic sections
$\Delta\to\zl$ with $\undl x(0)\cap E=\emptyset$ is chosen such that
$\undl x(\lambda)$ satisfies the additional condition:
\begin{equation}\label{eq:x-L-symp}
\text{either  } ~~g = 0~~ \text{  or   }~~ |L_i\cap\undl x|=l_{L_i,d}^2+1 \mod 2~~ \forall i\in\{0,...,g\},
\end{equation}
where $l_{L_i,d}=[f(\rb\Sigma_g\cap L_i)]\in H_1(L_i,\zb/2\zb)$ (see \cite[Lemma $3.2$]{brugalle2016}).
Denote by $\cl(X_\sharp,\undl L_\sharp,d,\undl x(0),J_0)$
the set $\{\bar f:\bar C\to X_\sharp\}$ of limits, as stable maps,
of maps in $\cl(\zl_\lambda, \undl L,d,\undl x(\lambda),J_\lambda)$,
where $\cl(\zl_\lambda, \undl L,d,\undl x(\lambda),J_\lambda)$
is the set of real $J_\lambda$-holomorphic curves $f:\Sigma_g\to \zl_\lambda$ of genus $g$
in $\zl_\lambda$, with $f_*[\Sigma_g]=d$, passing through $\undl x(\lambda)$,
and such that $f(\rb\Sigma_g)\subset L\cup S$
(see \cite[Section $3.2$]{brugalle2016} for more details).
Denote by $\cl(Z,E,d,\undl L',\undl x(\lambda)\cap Z,J_\lambda)$ the set of real $J_\lambda$-holomorphic curves
$f:\Sigma_{g-1} \to Z$ of genus $g-1$, with $f_*[\Sigma_{g-1}]=d$, whose image is not contained in $E$,
passing through $\undl{x}(\lambda)$, and such that $f(\rb\Sigma_{g-1})\subset L$
(see \cite[Section $3.3$]{brugalle2016} for more details).
$\cl(\zl_\lambda, \undl L,d,\undl x(\lambda),J_\lambda)$
(resp. $\cl(Z,E,d,\undl L',\undl x(\lambda)\cap Z,J_\lambda)$)
is a subset of $\rb\cl^\cb(\zl_\lambda,\emptyset,d,g,\undl x(\lambda),J_\lambda)$
(resp. $\rb\cl^{(0,d\cdot[E]\delta_1)}(Z,E,d,g-1,\undl x(\lambda)\cap Z,J_\lambda)$)
which is the set consisting of real elements of
$\cl^\cb(\zl_\lambda,\emptyset,d,g,\undl x(\lambda),J_\lambda)$
(resp. $\cl^{(0,d\cdot[E]\delta_1)}(Z,E,d,g-1,\undl x(\lambda)\cap Z,J_\lambda)$).
By using a real version of Proposition $\ref{prop:dgfcplxmoduli-symp}$
and the symplectic sum formula \cite{egh2000,ip2004,lr2001,li2002},
we can prove Theorem $\ref{thm:gdf-symp}$ in a similar way to the proof of Theorem $\ref{thm:gdf-alg}$.


\begin{thebibliography}{10}

\bibitem{abl2011}
A.~Arroyo, E.~Brugall\'{e}, and L.~L\'{o}pez de~Medrano.
\newblock {Recursive formulas for Welschinger invariants of the projective
  plane}.
\newblock {\em Int. Math. Res. Not. IMRN}, 5:1107--1134, 2011.

\bibitem{bru2015}
E.~Brugall\'{e}.
\newblock {Floor diagrams relative to a conic, and GW-W invariants of del Pezzo
  surfaces}.
\newblock {\em Adv. Math.}, 279:438--500, 2015.

\bibitem{brugalle2016}
E.~Brugall\'{e}.
\newblock {Real surgery of real symplectic manifolds and enumerative geometry}.
\newblock {\em arXiv: 1601.05708v1}, pages 1--25, 2016.

\bibitem{bp2013}
E.~Brugall\'{e} and N.~Puignau.
\newblock {Behavior of Welschinger invariants under Morse simplifications}.
\newblock {\em Rend. Semin.Mat. Univ. Padova}, 130:147--153, 2013.

\bibitem{bp2014}
E.~Brugall\'{e} and N.~Puignau.
\newblock {On Welschinger invariants of symplectic $4$-manifolds}.
\newblock {\em Comment. Math. Helv.}, 90(4):905--938, 2015.

\bibitem{dk00}
A.~Degtyarev and V.~Kharlamov.
\newblock {Topological properties of real algebraic varieties: Rokhlin's way}.
\newblock {\em Russian Math. Surveys}, 55(4):735--814, 2000.

\bibitem{dk02}
A.~Degtyarev and V.~Kharlamov.
\newblock Real rational surfaces are quasi-simple.
\newblock {\em J. Reine Angew. Math.}, 551:87--99, 2002.

\bibitem{dh2016a}
Y.~Ding and J.~Hu.
\newblock {Welschinger invariants of Blow-ups of symplectic 4-manifolds}.
\newblock {\em arXiv:1609.05655}, 2016.

\bibitem{egh2000}
Y.~Eliashberg, A.~Givental, and H.~Hofer.
\newblock Introduction to symplectic field theory.
\newblock {\em Geom. Funct. Anal.}, Special Volume,Part II:560--673, 2000.

\bibitem{ip2004}
E.~Ionel and T.~Parker.
\newblock {The symplectic sum formula for Gromov-Witten invariants}.
\newblock {\em Ann. of Math. (2)}, 159(3):935--1025, 2004.

\bibitem{iks2004}
I.~Itenberg, V.~Kharlamov, and E.~Shustin.
\newblock {Logarithmic equivalence of the Welschinger and the Gromov-Witten
  invariants}.
\newblock {\em Russian Math. Surveys}, 59(6):1093--1116, 2004.

\bibitem{iks2014}
I.~Itenberg, V.~Kharlamov, and E.~Shustin.
\newblock {Welschinger invariants revisited}.
\newblock {\em arXiv:$1409.3966$}, pages 1--22, 2014.

\bibitem{iks2013a}
I.~Itenberg, V.~Kharlamov, and E.~Shustin.
\newblock {Welschinger invariants of real del Pezzo surfaces of degree
  $\geqslant 2$}.
\newblock {\em Internat. J. Math.}, 26(8):1550060, 63, 2015.

\bibitem{kollar97}
J.~Koll\'{a}r.
\newblock {Real algebraic surfaces}.
\newblock {\em arXiv: alg-geom/9712003}, 1997.

\bibitem{lr2001}
A.-M. Li and Y.~Ruan.
\newblock {Symplectic surgery and Gromov-Witten invariants of Calabi-Yau
  3-folds}.
\newblock {\em Invent. Math.}, 145(1):151--218, 2001.

\bibitem{li2002}
J.~Li.
\newblock {A degeneration formula of GW-invariants}.
\newblock {\em J. Differential Geom.}, 60(2):199--293, 2002.

\bibitem{li2004}
J.~Li.
\newblock {Lecture notes on relative GW-invariants}.
\newblock In {\em Intersection theory and moduli, ICTP Lect. Notes, XIX, pages
  $41-96$ (electronic)}. Abdus Salam Int. Cent. Theoret. Phys., Trieste, 2004.

\bibitem{ss2013}
M.~Shoval and E.~Shustin.
\newblock {On Gromov-Witten invariants of del Pezzo surfaces}.
\newblock {\em Int. J. Math.}, 24(7):1350054, 44, 2013.

\bibitem{shustin2015}
E.~Shustin.
\newblock {On higher genus Welschinger invariants of del Pezzo surfaces}.
\newblock {\em Int. Math. Res. Not. IMRN}, 16:6907--6940, 2015.

\bibitem{sil1989}
R.~Silhol.
\newblock {\em Real algebraic surfaces}.
\newblock Springer-Verlag, Berlin, 1989.

\bibitem{v2000}
R.~Vakil.
\newblock {Counting curves on rational surfaces}.
\newblock {\em Manuscripta math.}, 102:53--84, 2000.

\bibitem{wel2005a}
J.-Y. Welschinger.
\newblock Invariants of real symplectic $4$-manifolds and lower bounds in real
  enumerative geometry.
\newblock {\em Invent. Math.}, 162(1):195--234, 2005.

\bibitem{wel2015}
J.-Y. Welschinger.
\newblock {Open Gromov-Witten invariants in dimension four}.
\newblock {\em J. Symplectic Geom.}, 13(4):1075--1100, 2015.

\end{thebibliography}

\end{document}